\newtheorem{thm}{Theorem}[section]
\newtheorem{lem}[thm]{Lemma}
\theoremstyle{remark}
\numberwithin{equation}{section}
\def\vz{\varepsilon}
\def\oz{\omega}
\def\lz{\lambda}
\def\dz{\delta}
\def\az{\alpha}
\def\gz{\gamma}
\def\tz{\theta}
\def\({\Bigl(}
\def \){ \Bigr)}
\def\ga{\gamma}
 \def\a{{\alpha}}
\def\R{\mathbb{R}}
\def\x{{\bf x}}
\def\va{\varepsilon}
\def\y{{\bf y}}
\def\j{{\bf j}}
\def\gaa{\boldsymbol{\ga}}
\begin{document}

\title[] {Average Case tractability of  multivariate  approximation with Gaussian  kernels}

\author[]{Jia Chen, Heping Wang} \address{School of Mathematical Sciences, Capital Normal
University, Beijing 100048,
 China.}
\email{ jiachencd@163.com;\ \  \ wanghp@cnu.edu.cn.}

\keywords{ Tractability; Exponential convergence; EC-tractability;
Gaussian covariance kernels; Average case setting}

\subjclass[2010]{41A25, 41A63, 65D15, 65Y20}

\thanks{
 Supported by the
National Natural Science Foundation of China (Project no.
11671271) and
 the  Beijing Natural Science Foundation (1172004)
 }

\begin{abstract} We study the problem of approximating
functions of $d$ variables in the average case setting for the
$L_2$ space $L_{2,d}$ with the standard Gaussian weight equipped
with a zero-mean Gaussian measure. The covariance kernel of this
Gaussian measure takes the form of a Gaussian kernel  with
non-increasing positive shape parameters $\gz_j^2$ for $j = 1, 2,
\dots, d$. The error of approximation is defined in the norm of
$L_{2,d}$.
 We study the average  case error of algorithms
that use at most $n$ arbitrary continuous linear functionals. The
information complexity $n(\vz, d)$ is defined as the minimal
number of linear functionals which are needed to find an
 algorithm whose
average  case error is at most $\vz$.  We study different notions
of tractability or exponentially-convergent tractability
(EC-tractability) which the information complexity $n(\vz, d)$
describe how behaves as a function of $d$ and $\vz^{-1}$ or as one
of $d$ and $(1+\ln\vz^{-1})$.
 We find necessary and sufficient conditions on various notions
of tractability and EC-tractability in terms of shape parameters.
In particular, for any positive $s>0$ and $t\in(0,1)$ we obtain
that the sufficient and necessary  condition on $\gz^2_ j$ for
which
$$\lim_{d+\vz^{-1}\to\infty}\frac{n(\vz,d)}{\vz^{-s}+d^t}=0$$
holds is
$$ \lim_{j\to \infty}j^{1-t}\ga_j^2\,\ln^+ \ga_j^{-2}=0,$$where
$\ln^+ x=\max(1,\ln x)$.

\end{abstract}

\maketitle
\input amssym.def

\section{Introduction and main results}

Recently, there has been an increasing interest in $d$-variate
computational problems with large or even huge $d$.  Examples
include problems in computational finance, statistics and physics.
Such problems are usually solved by algorithms that use finitely
many information operations. The information complexity $n(\vz,
d)$ is defined as the minimal number of information operations
which are needed to find an approximating solution to within an
error threshold $\vz$. A central issue is the study of how the
information complexity depends on $\vz$ and $d$.
 Such problem is called the
tractable problem. There are two kinds of tractability based on
polynomial-convergence and  exponential-convergence. The
(classical) tractability describes how the information complexity
$n(\vz, d)$  behaves as a function of $d$ and $\vz^{-1}$,  while
the exponentially-convergent tractability (EC-tractability)  does
as one of $d$ and $(1+\ln\vz^{-1})$. Nowadays study of
tractability and EC-tractability has become one of the busiest
areas of research in information-based complexity (see \cite{NW1,
NW2, NW3, DLP, IKP, PP, X2} and the references therein).

In this paper, we  consider tractability of a multivariate
approximation problem defined over the space $L_{2,d}$ in the
average case setting, where $$L_{2,d}=\Big\{ f\ \big| \
\|f\|_{L_{2,d}}=\bigg(\int_{\R^d} |f(\x)|^2\prod_{j=1}^d \frac
{\exp {(-x_j^2)}}{\sqrt{\pi}}\,d\x\bigg)^{1/2}<\infty\Big\}$$ is a
separable Hilbert space of real-valued functions on $\R^d$ with
inner product $$\langle f,g\rangle_{L_{2,d}}= \int_{\R^d} f(\x)
g(\x)\prod_{j=1}^d \frac {\exp {(-x_j^2)}}{\sqrt{\pi}}\,d\x.$$ The
space $L_{2,d}$ is equipped with a zero-mean Gaussian measure
$\mu_d$ with Gaussian covariance kernel
\begin{equation}\label{1.0}
K_{d,\gaa}(\x,\y)=\int_{L_{2,d}}f(\x)f(\y)\mu_d(df)=\prod_{j=1}^{d}K_{\gz_j}(x_j,
y_j), \ \x,\y\in \R^d,
\end{equation}
where $$K_{\gz}(x, y)=\exp (-\ga^2(x-y)^2), \ x,y\in \R,$$ and
$\gaa=\{\ga_j^2\}_{j\in \Bbb N}$ is a given sequence of shape
parameters  not depending on $d$ and satisfying
\begin{equation}\label{1.1}
\ga_1^2\geq \ga_2^2\geq\dots>0.
\end{equation}

 We consider multivariate approximation which is defined via the embedding operator
$$ {\rm App}_d: L_{2,d}\to L_{2,d}\ \ {\rm with}\ \  {\rm App}_d\,
f=f.$$ We approximate ${\rm APP}_d \, f$ by algorithms that use
only finitely many continuous linear functionals. A function $f\in
L_{2,d}$ is approximated by an algorithm
\begin{equation}\label{1.2}A_{n,d}(f)=\Phi
_{n,d}(L_1(f),L_2(f),\dots,L_n(f)),\end{equation} where
$L_1,L_2,\dots,L_n$ belong to continuous linear functionals on
$L_{2,d}$,
 and $\Phi _{n,d}:\;\Bbb R^n\to
L_{2,d}$ is an arbitrary measurable mapping. It is well known (see
\cite{NW1}) that we can restrict ourselves to linear algorithms
$A_{n,d}$ of the form \begin{equation}\label{1.3}A_{n,d}f =
\sum_{k=1}^n L_k(f) \psi_k, \end{equation} where $\psi_k\in
L_{2,d},\ k=1,2,\dots,n$. The average case error for  $A_{n,d}$ is
defined by
\begin{equation*}
e(A_{n,d})\;=\;\left ( \int _{L_{2,d}}\left \| {\rm
App}_d\,f-A_{n,d}f \right \|_{L_{2,d}}^{2}\mu _d(df) \right
)^{\frac{1}{2}}.
\end{equation*}

The $n$th minimal average case error, for $n\ge 1$, is defined by
\begin{equation*}
e(n,d)=\inf_{A_{n,d}}e(A_{n,d} ),
\end{equation*}
where the infimum is taken over all algorithms of the form
\eqref{1.2} or \eqref{1.3}. For $n=0$, we use $A_{0,d}=0$. We
remark that  the so-called initial error $e(0,d)$, defined by
\begin{equation*}
e(0,d)=\Big ( \int _{L_{2,d}}\big \| {\rm App}_d\,f \big
\|_{L_{2,d}}^{2}\mu _d(df) \Big )^{\frac{1}{2}},
\end{equation*}is equal to $1$. In other words, the normalized error criterion and the absolute error criterion
coincide.

The information complexity $n(\vz, d)$ is defined by
$$n(\vz,d)=\inf\ \{n\ |\ e(n,d)\le \vz\}.$$

 Let ${\rm App}=\{{\rm
App}_d\}_{d\in \Bbb N}$. First we consider the classical
tractability of App.

 Various notions of (the classical) tractability
have been studied recently for many multivariate problems.  We
briefly recall some of the basic tractability notions (see
\cite{NW1, NW3, S1, SiW}).

 We say App is

$\bullet$   {\it strongly polynomially tractable (SPT)}   iff
there exist non-negative numbers $C$ and $p$ such that for all
$d\in \Bbb N,\ \va \in (0,1)$,
\begin{equation*}
n(\va ,d)\leq C(\va ^{-1})^p;
\end{equation*} The exponent of SPT the exponent is defined to be  the infimum of all $p$ for which
the above inequality holds;

 $\bullet$  {\it polynomially tractable
(PT)} iff there exist non-negative numbers $C, p$ and $q$ such
that for all $d\in \Bbb N, \ \va \in(0,1)$,
\begin{equation*}
n(\va ,d)\leq Cd^q(\va ^{-1})^p;
\end{equation*}

$\bullet$   {\it quasi-polynomially tractable (QPT)} iff there
exist two constants $C,t>0$ such that for all $d\in \Bbb N, \ \va
\in(0,1)$,
\begin{equation*}
n(\va ,d)\leq C\exp(t(1+\ln\va ^{-1})(1+\ln d));
\end{equation*}

$\bullet$ {\it uniformly weakly tractable (UWT)} iff for all $s,
t>0$,
\begin{equation*}
\lim_{\varepsilon ^{-1}+d\rightarrow \infty }\frac{\ln n(\va
,d)}{(\va ^{-1})^{s }+d^{t }}=0;
\end{equation*}

$\bullet$  {\it weakly tractable (WT)} iff
\begin{equation*}
\lim_{\va ^{-1}+d\rightarrow \infty }\frac{\ln n(\va ,d)}{\va
^{-1}+d}=0;
\end{equation*}

$\bullet$ {\it $(s,t)$-weakly tractable ($(s,t)$-WT)} for positive
$s$ and $t$ iff
\begin{equation*}
\lim_{\varepsilon ^{-1}+d\rightarrow \infty }\frac{\ln n(\va
,d)}{(\va ^{-1})^{s }+d^{t }}=0.
\end{equation*}

Clearly, $(1,1)$-WT is the same as WT. If App is not WT, then  App
is called  intractable. We say that ${\rm App}$ suffers from {the
curse of dimensionality} if there exist positive numbers $C, \,\va
_0,\, \a $ such that for all $0<\va\leq \va _{0}$ and infinitely
many $d\in \Bbb N$,
\begin{equation*}
n(\va ,d)\geq C(1+\a )^{d}.
\end{equation*}

  SPT and QPT of
the above approximation problem  App have been studied in
\cite{FHW2} and \cite{K}, respectively. The following conditions
have been obtained therein:

\

 $\bullet$  SPT holds iff there exists  a positive number $\dz>1$ such that
$ \sum\limits_{j=1}^\infty \ga_{j}^{2/\dz}<\infty$ iff $r(\ga)>1$,
where
\begin{equation}\label{1.4} r(\ga)=\sup\,\big\{\dz>0\ | \ \sum_{j=1}^\infty
\ga_{j}^{2/\dz}<\infty\big\}=\sup\,\big\{\beta\ge 0\ |\
\lim_{j\to\infty}j^\beta\gz_j^2=0\big\}.
\end{equation}
 In this case, the exponent  of SPT is
$\frac 2{r(\ga)-1}$.

$\bullet$ QPT holds iff  $$\sup _{d\in \Bbb N}\frac{1}{\ln^+d}\sum
_{j=1}^d \ga _j^2(1+\ln(1+\ga_j^{-2}))<\infty,$$where $\ln^+
x=\max(1,\ln x)$.

\

In this paper we  obtain  complete results about the tractability
of App. Specially, we give the necessary and sufficient condition
for $(s, t)$-WT for $t\in(0,1)$ and $s>0$. Similar  conditions are
first given in our paper.  We use the new method. We remark that
in similar approximation problems with covariance kernels
corresponding to Euler and Wiener integrated processes under the
normalized error criterion, the necessary and sufficient
conditions for $(s, t)$-WT for $t\in (0, 1)$ and $s>0$ do not
completely match (see \cite{S2}).

\begin{thm}\label{thm1} Consider the above  approximation problem
App  with shape parameters $\gaa=\{\gz_j^2\}$
 satisfying \eqref{1.1}.

\vskip 2mm

(i)  PT  holds iff  SPT holds  iff \begin{equation}\label{1.5}
r(\ga)=\underset{j\to\infty}{\underline{\lim}}\frac{\ln
\ga_j^{-2}}{\ln j} >1 .
\end{equation}

(ii)  For  $t>1$ and $s>0$,  $(s,t)$-WT holds for all shape
parameters.

\vskip 2mm

(iii) For $t=1$ and $s>0$, $(s,1)$-WT holds iff WT holds iff
\begin{equation}\label{1.7}
\lim _{j\to \infty}\ga_j^2=0.
\end{equation}

(iv) For  $t\in(0,1)$ and $s>0$,  $(s,t)$-WT holds iff
\begin{equation}\label{1.8}
\lim_{j\to \infty}j^{1-t}\ga_j^2\,\ln^+ \ga_j^{-2}=0.
\end{equation}

(v)  UWT holds iff
\begin{equation}\label{1.9}
\underset{j\to\infty}{\underline{\lim}}\frac{\ln\ga_j^{-2}}{\ln
j}\geq 1.
\end{equation}

(vi)  $App$ suffers from the curse of dimensionality if
$\lim\limits_{j\to \infty}\ga_j^2>0$.
\end{thm}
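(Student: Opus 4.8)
The plan is to reduce everything to the eigenvalues of the covariance operator and then to a single pair of two-sided bounds on $n(\vz,d)$. Writing $W_d$ for the operator with kernel $K_{d,\gaa}$, the product form of $K_{d,\gaa}$ makes its eigenvalues the products $\lambda_{d,\k}=\prod_{j=1}^d\lambda_{\gamma_j,k_j}$ over $\k\in\mathbb N_0^d$, where the univariate Mehler eigenvalues are $\lambda_{\gamma_j,k}=(1-\omega_j)\omega_j^{\,k}$, $k\ge0$, with $\omega_j=\omega(\gamma_j^2)\in(0,1)$ strictly increasing in $\gamma_j^2$ and $\omega_j\sim\gamma_j^2$ as $\gamma_j^2\to0$. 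Since $\sum_{k\ge0}\lambda_{\gamma_j,k}=1$ the trace is $1$, recovering $e(0,d)=1$, and the identity $e(n,d)^2=\sum_{k>n}\lambda_{d,k}$ (eigenvalues nonincreasing) shows $n(\vz,d)$ is the least number of largest eigenvalues summing to at least $1-\vz^2$. From $1-\vz^2\le n(\vz,d)\,\lambda_{d,1}=n(\vz,d)\prod_{j\le d}(1-\omega_j)$ I get the lower bound
\[
\ln n(\vz,d)\ \ge\ \ln(1-\vz^2)+\sum_{j=1}^d\omega_j,
\]
while thresholding the eigenvalues against $M_d(\tau)=\sum_{\k}\lambda_{d,\k}^{\tau}=\prod_{j=1}^d\tfrac{(1-\omega_j)^{\tau}}{1-\omega_j^{\tau}}$ at $\delta=(\vz^2/M_d(\tau))^{1/(1-\tau)}$ gives, for every $\tau\in(0,1)$,
\[
n(\vz,d)\ \le\ \vz^{-2\tau/(1-\tau)}\,M_d(\tau)^{1/(1-\tau)},\qquad \ln M_d(\tau)=\sum_{j=1}^d\bigl[\tau\ln(1-\omega_j)-\ln(1-\omega_j^{\tau})\bigr].
\]

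These two inequalities settle (i), (ii), (iii), (vi). For (ii) each summand of $\ln M_d(\tau)$ is at most $-\ln(1-\omega_1^{\tau})$, so $\ln M_d(\tau)\le c_\tau d$ and $\ln n(\vz,d)\lesssim\ln\vz^{-1}+d=o(\vz^{-s}+d^t)$ when $t>1$, for all shape parameters. For (iii) and (vi), if $\lim_j\gamma_j^2>0$ then $\omega_j\ge w>0$ for all $j$, the lower bound gives $n(\vz,d)\ge(1-\vz^2)(1-w)^{-d}$, which is the curse of dimensionality and also shows $(s,1)$-WT fails; conversely $\lim_j\gamma_j^2=0$ forces $\omega_j\to0$, so $\tfrac1d\ln M_d(\tau)\to0$ by Cesàro and $(s,1)$-WT follows from the upper bound. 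For (i), a fixed $\tau\in(1/r(\gamma),1)$ when $r(\gamma)>1$ gives $\sum_j\gamma_j^{2\tau}<\infty$, hence $\sup_d M_d(\tau)<\infty$ and $n(\vz,d)\le C\vz^{-p}$, i.e.\ SPT (so PT); the converse PT$\Rightarrow$SPT uses that polynomial tractability forces $\sup_dM_d(\tau)<\infty$ for some $\tau<1$, equivalently $\sum_j\gamma_j^{2\tau}<\infty$, equivalently $r(\gamma)>1$, as in \cite{FHW2}.

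The core is (iv). For sufficiency I use the upper bound with $\tau=1-\eta\uparrow1$: a Taylor expansion gives $\tau\ln(1-\omega_j)-\ln(1-\omega_j^{\tau})=\eta\,\omega_j\ln(e/\omega_j)+O(\eta^2)$, uniformly once $\eta\ln(1/\omega_d)$ is small, so that
\[
\frac{1}{1-\tau}\ln M_d(\tau)\ \longrightarrow\ S_d:=\sum_{j=1}^d\omega_j\ln(e/\omega_j)\ \asymp\ \sum_{j=1}^d\gamma_j^2\ln^+\gamma_j^{-2}.
\]
Condition \eqref{1.8} says $j^{1-t}\gamma_j^2\ln^+\gamma_j^{-2}\to0$, and since $\sum_{j\le d}j^{t-1}\asymp d^t$ this yields $S_d=o(d^t)$. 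Choosing $\tau_d\uparrow1$ at a suitable rate (for instance $1-\tau_d=d^{-t/2}$ away from super-exponentially decaying $\gamma_j^2$) makes $\tfrac1{1-\tau_d}\ln M_d(\tau_d)\sim S_d=o(d^t)$, while the $\vz$-term obeys $d^{t/2}\ln\vz^{-1}/(\vz^{-s}+d^t)\le \ln\vz^{-1}/\vz^{-s/2}\to0$ by AM--GM; hence $(s,t)$-WT holds.

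For necessity I argue the contrapositive. If \eqref{1.8} fails there are $c_0>0$ and $j_i\uparrow\infty$ with $j_i^{1-t}\omega_{j_i}\ln(1/\omega_{j_i})\ge c_0$; monotonicity of $\omega_j$ then forces $\omega_j\ge\omega_{d}\gtrsim d^{-(1-t)}/\ln d$ for all $j\le d:=j_i$, so $\Sigma_d:=\sum_{j\le d}\omega_j\gtrsim d^t/\ln d$. The decisive step is an enhanced lower bound: fixing $\vz_0$ and a threshold $\theta$ just below the bulk of the spectrum, I count the product eigenvalues above $\theta$ supported on $\{0,1\}^d$ with at most $m\asymp\Sigma_d$ excited coordinates, obtaining at least $\binom{d}{m}$ of them, and I check that the sub-threshold mass still exceeds $\vz_0^2$; this gives $\ln n(\vz_0,d)\gtrsim\Sigma_d\ln(d/\Sigma_d)\gtrsim d^t$, so $(s,t)$-WT fails. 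Finally (v) follows from (iv): UWT is $(s,t)$-WT for all $t\in(0,1)$, and requiring \eqref{1.8} for every such $t$ is, after absorbing the $\ln^+$ factor, exactly $\lim_j j^{\beta}\gamma_j^2=0$ for all $\beta<1$, i.e.\ $r(\gamma)\ge1$. \emph{The main obstacle is this enhanced lower bound}: the elementary bound only yields $\ln n\gtrsim\Sigma_d$ and loses the factor $\ln(d/\Sigma_d)\asymp\ln\gamma_d^{-2}$, so recovering the logarithmic factor of \eqref{1.8}—through a simultaneous control of the eigenvalue count and of the mass left below the threshold—is the heart of the matter.
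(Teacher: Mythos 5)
Your global frame---reduce to the product eigenvalues $\lambda_{d,\mathbf{k}}=\prod_{j\le d}(1-\omega_j)\omega_j^{k_j}$ and squeeze $n(\varepsilon,d)$ between the elementary lower bound $1-\varepsilon^2\le n(\varepsilon,d)\lambda_{d,1}$ and the threshold upper bound via $M_d(\tau)=\sum_{\mathbf{k}}\lambda_{d,\mathbf{k}}^{\tau}$---is the same as the paper's, and your treatment of (ii), (iii), (vi), and of (v) as a corollary of (iv), is correct in outline. In (i) there is a slip: PT does not force $\sup_d M_d(\tau)<\infty$ (that is the SPT criterion); by the criterion of \cite{LPW} it only forces $\sum_{k\le d}\ln\bigl(1/(1-\omega_{\gamma_k}^{\tau})\bigr)=O(\ln d)$, whence $d\,\omega_{\gamma_d}^{\tau}=O(\ln d)$ and $r(\gamma)\ge 1/\tau>1$, so the conclusion is reachable, but not by the route you cite.

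The genuine gap is the necessity half of (iv), which you yourself flag as the main obstacle: the ``enhanced lower bound'' $\ln n(\varepsilon_0,d)\gtrsim\Sigma_d\ln(d/\Sigma_d)$ is not proved, and the counting scheme cannot deliver it as stated. To put \emph{every} pattern $\mathbf{k}\in\{0,1\}^d$ with at most $m\asymp\Sigma_d$ excited coordinates above the threshold $\theta$, you must take $\ln(\lambda_{d,1}/\theta)\ge\sum_{j=d-m+1}^{d}\ln(1/\omega_j)$, since the excited set may consist of the \emph{smallest} $\omega_j$'s; the sub-threshold mass is then the tail probability $P\bigl(\sum_j k_j\ln(1/\omega_j)\ge\ln(\lambda_{d,1}/\theta)\bigr)$ under the product geometric distribution, whose mean is $\sum_j\tfrac{\omega_j}{1-\omega_j}\ln(1/\omega_j)$. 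These two scales are comparable only when the numbers $\ln(1/\omega_j)$, $j\le d$, are comparable. Take a monotone ``plateau-then-cliff'' sequence built recursively along $d=d_i$: a plateau $\omega_j=A_i$ for $j\le d_i/2$ followed by a cliff $\omega_j=B_i=A_i^{K_i}$ for $d_i/2<j\le d_i$, with $K_i\to\infty$, block lengths chosen so that $\tfrac12 d_iA_i\ge K_i d_iB_i$ (plateau carries the mass) while $d_i^{1-t}B_iK_i\ln(1/A_i)\ge c_0$ (so \eqref{1.8} fails at $d_i$); monotonicity across blocks is kept by $A_{i+1}\le B_i$. Then your threshold is $\gtrsim\Sigma_{d_i}\ln(1/B_i)\gtrsim K_i\sum_j\omega_j\ln(1/\omega_j)$, and Markov's inequality bounds the sub-threshold mass by $O(1/K_i)\to0$: you cannot simultaneously capture $\binom{d}{m}$ eigenvalues and keep mass $\varepsilon_0^2$ below the threshold. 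Repairing this (restrict excitations to plateau coordinates, then prove a genuine anti-concentration bound for the log-eigenvalue sum) is exactly the missing work. The paper avoids counting altogether: it applies H\"older's inequality with a $d$-dependent exponent $1+s_d$, where $s_d=\tfrac12(\ln^+u_d^{-1})^{-1}$ and $u_d=\max(\omega_{\gamma_d},\tfrac1{2d})$, to $1-\varepsilon^2\le\bigl(\sum_k\lambda_{d,k}^{1+s_d}\bigr)^{1/(1+s_d)}n^{s_d/(1+s_d)}$; because $\omega_{\gamma_d}^{s_d}\asymp1$, the explicit product $\sum_k\lambda_{d,k}^{1+s_d}$ yields $\omega_{\gamma_d}-\omega_{\gamma_d}^{1+s_d}\asymp s_d\,\omega_{\gamma_d}\ln(1/\omega_{\gamma_d})$, which is precisely how the logarithmic factor in \eqref{1.8} is recovered. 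The same device with exponent $1-s_d$ gives the sufficiency half and replaces your vague prescription ``$1-\tau_d=d^{-t/2}$ away from super-exponentially decaying $\gamma_j^2$'': the truncation $u_k$ caps $1/s_d\le 2\ln^+(2d)$, so the $\varepsilon$-exponent stays logarithmic in $d$ uniformly over \emph{all} admissible shape parameters.
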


\vskip 2mm

  It is of interest to compare the tractability results
of Theorem 1.1 with the ones in the worst case setting from
\cite{FHW1}, where the behavior of the information complexity in
the worst case setting  is studied using either {\it the absolute
error criterion (ABS)}  or {\it the normalized error criterion
(NOR)} (see Subsection 2.3 for related notions in the worst case
setting).

\vskip 3mm

For ABS, we have

\vskip 2mm

 $\bullet$  SPT holds for all shape parameters with the
exponent $\min\big(2, \frac{2}{r(\ga)}\big),$ where $r(\gz)$ is
given by \eqref{1.4}.

\vskip 2mm

$\bullet$ Obviously, SPT implies all PT, QPT, WT, $(s,t)$-WT for
any positive $s$ and $t$, as well as UWT, for all shape
parameters.

\vskip 3mm

For NOR, we have

\vskip 2mm

 $\bullet$ SPT holds iff PT  holds iff
$r(\ga)>0$, with the exponent $\frac{2}{r(\ga)}$.

\vskip 2mm

$\bullet$ QPT holds for all shape parameters.

\vskip 2mm

$\bullet$ Obviously, QPT implies $(s,t)$-WT for any positive $s$
and $t$, as well as UWT for all shape parameters.

\vskip 2mm

We stress that there is now a difference between ABS and NOR in
the worst case setting. For all shape parameters, we always have
SPT for ABS and  QPT for NOR in the worst case setting, whereas in
the average case setting we only have $(s,t)$-WT for $s>0$ and
$t>1$. Also the sufficient and necessary condition for SPT (or PT)
in the average case setting is stronger than the one for NOR in
the worst case setting.

\vskip 3mm

Next we consider  exponential convergence tractability of the
approximation problem App. Because the covariance kernel function
of the Gaussian measure $\mu_d$ is an  analytic function, the
$n$th minimal error $e(n,d)$ can be expected to decay faster than
any polynomial. Indeed, we expect exponential convergence.
 If there exists a number $q\in(0,1)$ such that for
all $d=1,2,...,$ there are positive numbers $C_{1,d}$, $C_{2,d}$
and $p_d$ for which
\begin{equation}\label{1.10}
e(n,d)\le C_{1,d}q^{(n/C_{2,d})^{p_d}},\quad \text{for all }\,
n\in \Bbb N,
\end{equation} then we say that App is \emph{exponential convergence
(EXP)}.
  The supremum of positive $p_d$ in \eqref{1.10} is called the
exponent of EXP. If $p_d$ can be chosen as positive and
independent of $d$ we have \emph{uniform exponential convergence
(UEXP)}.

If App is EXP, then we can discuss the {\it tractability with
exponential convergence (EC-tractabilty)}. Recently, there are
many papers where EC-tractability  is considered (see \cite{DLP,
IKP, PP, X2}).

 In the definitions of SPT, PT, QPT, UWT, WT, and
$(s,t)$-WT, if we replace $\frac1{\vz}$ by $(1+\ln \frac 1{\vz})$,
we get the definitions of \emph{exponential convergence-strong
polynomial tractability (EC-SPT)}, \emph{exponential
convergence-polynomial tractability (EC-PT)}, \emph{exponential
convergence-quasi-polynomial tractability (EC-QPT)},
\emph{exponential convergence-uniform weak tractability
 (EC-UWT)}, \emph{exponential convergence-weak tractability
 (EC-WT)}, and \emph{exponential convergence-$(s,t)$-weak tractability
 (EC-$(s,t)$-WT)}, respectively.

\vskip 2mm

In \cite{SW}, Sloan and Wo\'zniakowski obtained the following
complete results about the EC-tractability
 in the worst case setting using  ABS and NOR.

\vskip 2mm

For ABS or NOR, we have

\vskip 2mm

$\bullet$ EXP  holds with the exponent $p^*_d=1/d$  and  UEXP does
not hold for all shape parameters $\gaa$ satisfying \eqref{1.1}.

\vskip 2mm

$\bullet$  EC-SPT and EC-PT and EC-QPT do not hold for all shape
parameters.

\vskip 2mm

$\bullet$ If $\max(s,t)>1$ then EC-$(s,t)$-WT holds for all shape
parameters.

\vskip 2mm

$\bullet$ EC-WT holds iff $\lim\limits_{j\to \infty}\ga^2_j=0$.

\vskip 2mm

$\bullet$ EC-$(1,t)$-WT with $t<1$ holds iff
$\lim\limits_{j\to\infty}\frac{\ln j}{\ln \ga_j^{-2}}=0$.

\vskip 2mm

$\bullet$ EC-$(s,t)$-WT with $s<1$ and $t\leq 1$  holds iff
$\lim\limits_{j\to\infty}\frac{j^{(1-s)/s}}{\ln \ga_j^{-2}}=0$.

\vskip 2mm

$\bullet$  EC-UWT holds iff $\lim\limits_{j\to\infty}\frac{\ln
(\ln\ga_j^{-2})}{\ln j}=\infty$.

\vskip 2mm

EC-tractability in the worst and average case settings has the
intimate connection. Specially, according to \cite[Theorems 3.2
and 4.2]{X2} and \cite[Theorem 3.2]{LXD}, we have the same results
in the worst and average case settings using ABS concerning EC-WT,
EC-UWT, and EC-$(s,t)$-WT for $0<s\le 1$ and $t>0$.

 Based on the results of \cite{SW}, we get the EC-tractability of
App in the average case setting.

\begin{thm}\label{thm2} Consider the above  approximation problem
App  with shape parameters $\gaa=\{\gz_j^2\}$
 satisfying \eqref{1.1}.

\vskip 2mm

(i) EXP  holds with the exponent $p^*_d=1/d$  and  UEXP does not
hold for all shape parameters $\gaa$ satisfying \eqref{1.1}.

\vskip 2mm

(ii)  EC-SPT and EC-PT and EC-QPT do not hold for all shape
parameters.

\vskip 2mm

(iii)  If $s>0$ and $t>1$ then EC-$(s,t)$-WT holds for all shape
parameters.

\vskip 2mm

(iv) EC-$(s,1)$-WT with $s\ge 1$ holds iff EC-WT holds iff
$\lim\limits_{j\to \infty}\ga^2_j=0.$

\vskip 2mm

(v) EC-$(s,t)$-WT with $s<1$ and $t\leq 1$  holds iff
$$\lim\limits_{j\to\infty}\frac{j^{(1-s)/s}}{\ln \ga_j^{-2}}=0.$$

(vi) EC-$(1,t)$-WT with $t<1$ holds iff
$$\lim\limits_{j\to\infty}\frac{\ln j}{\ln \ga_j^{-2}}=0.$$

 (vii) EC-$(s,t)$-WT with $s>1$ and $t< 1$ holds iff
\begin{equation}\label{1.12}\lim_{j\to
\infty}j^{1-t}\ga_j^2\,\ln^+
\ga_j^{-2}=0.\end{equation}

(viii)  EC-UWT holds iff
$$\lim\limits_{j\to\infty}\frac{\ln(\ln\ga_j^{-2})}{\ln
j}=\infty.$$
\end{thm}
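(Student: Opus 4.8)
The plan is to reduce the whole theorem to a single diagonalization of the problem in terms of the eigenvalues of the covariance operator, and then to translate each EC-tractability notion into a condition on those eigenvalues, mirroring the worst-case analysis of \cite{SW}. The first step is to recall the standard fact that in the average case setting the $n$th minimal error is governed by the ordered eigenvalues $\lambda_{1,d}\ge\lambda_{2,d}\ge\cdots$ of the integral operator with kernel $K_{d,\gaa}$: one has $e(n,d)^2=\sum_{k>n}\lambda_{k,d}$, and since the kernel is a product over coordinates, these eigenvalues are products $\prod_{j=1}^d\lambda_{k_j}^{(j)}$ of the univariate eigenvalues of the one-dimensional Gaussian kernel $K_{\gz_j}$. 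The univariate spectrum is explicitly known (it is geometric, of Mehler type): for each $j$ the eigenvalues decay like $c\,\omega_j^{\,k}$ with a ratio $\omega_j\in(0,1)$ determined by $\gz_j^2$, and $\omega_j\to 0$ as $\gz_j^2\to 0$. Writing the information complexity through this product spectrum is exactly what makes exponential convergence visible, and parts (i) and (ii)—EXP with exponent $1/d$, failure of UEXP, EC-SPT, EC-PT, EC-QPT—follow as in the worst case, essentially verbatim, once the eigenvalue product structure is in hand.

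For the weak-tractability parts (iii)--(viii) the key quantitative object is $\ln n(\vz,d)$ as a function of $d$ and $\ln\vz^{-1}$. After replacing $\vz^{-1}$ by $(1+\ln\vz^{-1})$ in each definition, the problem becomes: for which shape parameters does
\begin{equation*}
\lim_{d+\ln\vz^{-1}\to\infty}\frac{\ln n(\vz,d)}{(1+\ln\vz^{-1})^{s}+d^{t}}=0?
\end{equation*}
The strategy is to obtain two-sided estimates on $\ln n(\vz,d)$ in terms of the partial sums $\sum_{j=1}^d\gz_j^2\ln^+\gz_j^{-2}$ and related quantities, and then match these against the denominator. Since the univariate ratio $\omega_j$ satisfies $\ln\omega_j^{-1}\sim c\,\ln\gz_j^{-2}$ as $\gz_j^2\to0$ (up to bounded factors), the effective ``cost'' contributed by coordinate $j$ is comparable to $\gz_j^2\ln^+\gz_j^{-2}$, which is precisely the quantity appearing in \eqref{1.12} and in the QPT condition. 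This is why the borderline case $s>1$, $t<1$ in part (vii) produces condition \eqref{1.12}, identical to the classical $(s,t)$-WT condition \eqref{1.8}: once the exponential convergence has been absorbed, the $\ln\vz^{-1}$ in the denominator dominates via the $s>1$ exponent, and only the $d^{t}$ term interacts with the tail of the shape parameters in the same way as in Theorem~\ref{thm1}(iv).

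Concretely, I would handle the cases by their dominant denominator term. When $t>1$ (part (iii)) the $d^{t}$ term overwhelms any polynomially-growing cost, so EC-$(s,t)$-WT is automatic. When $s\ge1$ and $t=1$ (part (iv)), or in the regimes of (v) and (vi), the controlling behavior is linear or sublinear in $d$, and the necessary-and-sufficient conditions are read off by comparing the growth of $\sum_{j=1}^d\ln^+\gz_j^{-2}$-type sums (for the $s<1$ or $t<1$ thresholds) against $d^{t}$; these match the worst-case conditions of \cite{SW} because, as noted in the excerpt, ABS in the worst and average case settings coincide for EC-WT, EC-UWT and EC-$(s,t)$-WT with $0<s\le1$. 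The genuinely new case is (vii), $s>1$ and $t<1$, where neither the worst-case results nor the simple dominance arguments apply directly. The main obstacle will be proving the sharp equivalence there: I expect the sufficiency direction to require a careful splitting of the eigenvalue product sum into a ``small index'' block (controlled by $(1+\ln\vz^{-1})^{s}$ with $s>1$) and a ``large index'' tail (controlled by $d^{t}$ via condition \eqref{1.12}), while the necessity direction will need a lower bound on $\ln n(\vz,d)$ exhibiting a specific $\vz$ and $d$ along which the ratio stays bounded away from zero whenever \eqref{1.12} fails. Balancing these two blocks to extract exactly the exponent $1-t$ on $j$ is where the estimates must be done with care; the remaining parts are then routine once this template is established.
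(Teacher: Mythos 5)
Your high-level architecture does match the paper's: parts (i)--(ii) via the product eigenvalue structure plus the worst-case results, parts (v), (vi), (viii) (and the $s\le 1$, $t>1$ portion of (iii)) via the known coincidence of worst- and average-case ABS results for EC-WT, EC-UWT and EC-$(s,t)$-WT with $0<s\le 1$, and part (vii) correctly singled out as the genuinely new case. But for (vii) --- the heart of the theorem --- your proposal has a genuine gap: both directions are left as expectations, and the two ideas that actually make them provable are missing. For necessity, no new counterexample construction is needed: since $1+\ln\vz^{-1}\le \vz^{-1}$, one has $(1+\ln\vz^{-1})^s+d^t\le \vz^{-s}+d^t$, so EC-$(s,t)$-WT implies classical $(s,t)$-WT, and the necessity of \eqref{1.12} is then immediate from Theorem~\ref{thm1}(iv); this is exactly how the paper argues. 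Your alternative --- exhibiting specific $(\vz,d)$ along which the ratio stays bounded away from zero whenever \eqref{1.12} fails --- amounts to re-proving the necessity half of Theorem~\ref{thm1}(iv), which is the hardest estimate in the paper (the regularization $u_k=\max(\oz_k,\tfrac{1}{2k})$, $s_d=\tfrac12(\ln^+u_d^{-1})^{-1}$ and the mean value theorem manipulations leading from \eqref{3.22} to \eqref{3.27}); your heuristic that the ``effective cost'' of coordinate $j$ is comparable to $\gz_j^2\ln^+\gz_j^{-2}$ is precisely the conclusion of that argument, and asserting it ``up to bounded factors'' is not a proof. For sufficiency, no fresh splitting of the eigenvalue sum is needed either: the paper simply reuses the upper bound \eqref{3.31} and the limit \eqref{3.25-1} already established in the proof of Theorem~\ref{thm1}(iv); the only new ingredient is Young's inequality with $p=\frac{1+s}{2}$, $p'=\frac{s+1}{s-1}$ (this is exactly where $s>1$ enters) to absorb the cross term $\ln^+(2d)\ln(\vz^{-1})$ against $(1+\ln\vz^{-1})^s+d^t$.

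Two smaller points. In (iii) you call EC-$(s,t)$-WT for $t>1$ ``automatic'' because $d^t$ overwhelms any polynomially growing cost; this is too quick, since the limit $\vz^{-1}+d\to\infty$ includes paths with $d$ bounded and $\vz\to 0$, along which $d^t$ does nothing and one needs control of $\ln n(\vz,d)$ by bounds of the type $\ln C_d + d\ln\ln\vz^{-1}$ (i.e., EXP with explicit $d$-dependence of the constants), or, as the paper does, the transfer result for $0<s\le1$, $t>1$ combined with monotonicity of EC-$(s,t)$-WT in $s$. Similarly, in (iv) the transfer results you invoke cover only $s\le 1$; for $s>1$, $t=1$ you again need the implication EC-$(s,1)$-WT $\Rightarrow$ $(s,1)$-WT together with Theorem~\ref{thm1}(iii) for necessity, and EC-WT plus monotonicity in $s$ for sufficiency.
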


Let us compare the results about EC-tractability in the worst and
average case settings. There are the same conclusion for EC-SPT,
EC-PT, EC-QPT, EC-UWT, WT, and EC-$(s,t)$-WT with $s\le1, \ t>0$
or $s>0, t>1$ in the worst and average case settings.  We always
have EC-$(s,t)$-WT with $s>1,\ 0<t\le1$ for all shape parameters
in the worst  case setting, whereas in the  average case setting,
EC-$(s,t)$-WT with $s>1,\ 0<t\le1$ holds iff $(s,t)$-WT with
$s>1,\ 0<t\le1$ holds iff
$$\lim_{j\to \infty}j^{1-t}\ga_j^2\,\ln^+ \ga_j^{-2}=0.$$

We also compare the results about tractability and EC-tractability
in the average case setting.  We never have  EC-SPT, EC-PT,
EC-QPT, whereas SPT, PT, QPT hold for shape parameters decaying
fast enough. For all shape parameters, we always have
EC-$(s,t)$-WT and $(s,t)$-WT for $s>0$ and $t>1$. There are the
same sufficient and necessary conditions for which EC-$(s,t)$-WT
and  $(s,t)$-WT hold with $s>1, \ 0\le t\le1$ or $s=t=1$. In the
other cases, we need to assume more conditions about shape
parameters to get EC-UWT or EC-$(s,t)$-WT than ones to get UWT or
$(s,t)$-WT with $0<s<1,\ 0<t\le 1$ or $s=1, 0<t<1$.

The paper is organized as follows. In Subsection 2.1 we give
concept
 of non-homogeneous tensor product problems in the
average case setting.
   Subsections 2.2 and 2.3 are
devoted to introducing the average and worst case approximation
problems  with Gaussian  kernels.
  In  Section 3, we give the proofs of Theorems 1.1 and 1.2.

\section{Preliminaries}

\subsection{Average case non-homogeneous tensor product
problems}

\

We  recall the concept of non-homogeneous tensor product problems,
see \cite{LPW}. Let $F_d, H_d$  are given by tensor products. That
is,
\begin{equation*}
F_d=F^{(1)}_1\otimes F^{(1)}_2\otimes \dots \otimes F^{(1)}_d
\quad \text {and} \quad H_d=H^{(1)}_1\otimes  H^{(1)}_2\otimes
\dots \otimes H^{(1)}_d,
\end{equation*}
where Banach spaces $F^{(1)}_k$ are of univariate real functions
equipped with a zero-mean Gaussian measure $\mu^{(1)}_k$, and
$H^{(1)}_k$ are Hilbert spaces, $k=1,2,\dots,d$. We set
$$S_d=S^{(1)}_1\otimes S^{(1)}_2\otimes \dots \otimes S^{(1)}_d,\ \ \mu_d=\mu_1^{(1)}\otimes \mu_2^{(1)}\otimes \dots \otimes \mu_d^{(1)},$$
where
$$S^{(1)}_k=F^{(1)}_k\to H^{(1)}_k,\quad k=1,2,\dots,d$$ are continuous linear
operators. Then $\mu_d$ is a zero-mean Gaussian measure on $F_d$
with covariance operator $C_{\mu_d}: F_d^*\to F_d$.

Let $\nu_d =\mu_d (S_d)^{-1}$ be the induced measure. Then $\nu_d$
is a zero-mean Gaussian measure on $H_d$ with covariance operator
$C_{\nu_d}: H_d\to H_d$ given by $$ C_{\nu_d}=S_d
\,C_{\mu_d}\,S_d^*,$$where $S_d^*:H_d\to F_d^*$ is the operator
dual to $S_d$.
 Let
$\nu^{(1)}_k=\mu^{(1)}_k(S^{(1)}_k)^{-1}$ be the induced zero-mean
Gaussian measure on $H^{(1)}_k$, and let
$C_{\nu^{(1)}_k}:H^{(1)}_k\to H^{(1)}_k$ be the covariance
operator of the measure $\nu^{(1)}_k$. Then
$$ \nu_d=\nu_1^{(1)}\otimes \nu_2^{(1)}\otimes \dots \otimes \nu_d^{(1)},\ \ {\rm and}\ \ C_{\nu _d}=C_{\nu^{(1)}_1}\otimes C_{\nu^{(1)}_2}\otimes \dots C_{\nu^{(1)}_d}.$$
 The eigenpairs of
$C_{\nu^{(1)}_k}$ are denoted by $\big\{(\lz(k,j),
\eta(k,j))\big\}_{j\in \Bbb N}$, and satisfy
$$C_{\nu^{(1)}_k}(\eta(k,j))=\lz(k,j)\eta(k,j), \ {\rm with}\ \lz(k,1)\geq \lz(k,2)\geq \dots \geq 0.$$
Then  $${\rm
trace}(C_{\nu^{(1)}_k})=\int_{H^{(1)}_k}\|f\|^2_{H^{(1)}_k}\nu^{(1)}_k(df)
=\sum_{j=1}^\infty \lz(k,j) <\infty.$$ The eigenpairs of $C_{\nu
_d}$ are given by
$$\big\{(\lz _{d,\j},\eta _{d,\j})\big\}_{\j=(j_1,j_2,\dots,j_d)\in \Bbb N^d},$$
where
$$\lz_{d,\j}=\prod _{k=1}^d\lz (k,j_k)\quad \text{and}\quad \eta_{d,\j}=\prod_{k=1}^d\eta(k,j_k).$$

Let the sequence $\{\lz _{d,j}\}_{j\in \Bbb N}$ be the
non-increasing rearrangement of $\{\lz_{d,\j}\}_{\j\in \Bbb N^d}$.
Then we obtain
\begin{equation*}
\sum_{j\in \Bbb N}\lz^{\tau}_{d,j}=\prod_{k=1}^d\sum_{j=1}^\infty
\lz(k,j)^\tau,\quad \text{for any}\quad \tau>0.
\end{equation*}

We approximate $S_d \, f$ by algorithms $A_{n,d}$ of the form
\eqref{1.2} that use only finitely many continuous linear
functionals on $ F_d$. Then the  $n$th minimal average  case error
is given by
$$e(n,S_d):=\inf_{A_{n,d}} \;\Big ( \int _{F_d}\big \| S_d\,f
-A_{n,d}f\big \|_{H_d}^{2}\mu _d(df) \Big )^{\frac{1}{2}}= \Big
(\sum _{j=n+1}^{\infty}\lambda _{d,j}  \Big )^{\frac{1}{2}},$$ and
is achieved by the $n$th optimal algorithm
\begin{equation*}
A_{n,d}^{*}(f)=\sum_{j=1}^n  \big\langle f,\eta _{d,j} \big
\rangle _{H_d}\eta _{d,j}.\end{equation*} The initial error for
$S_d$ is
$$  e(0,S_d)=\Big ( \int _{F_d}\big \| S_d\,f \big
\|_{H_d}^{2}\mu _d(df) \Big )^{\frac{1}{2}}=\Big (\sum
_{j=1}^{\infty}\lambda _{d,j}  \Big )^{\frac{1}{2}}.$$
 The information
complexity for $S_d$ can be studied using either the absolute
error criterion (ABS), or the normalized error criterion (NOR).
Then we define the information complexity $n^X(\va ,S_d)$ for
$X\in \{ {\rm ABS,\, NOR}\}$ as
\begin{equation*}
n^{ X}(\va ,S_d)=\min\{n:\,e(n,S_d)\leq \va CRI_d\},
\end{equation*}where
\begin{equation*}
CRI_d=\left\{\begin{matrix}
 &\ 1, \; \qquad\text{ for X=ABS,} \\
 &e(0,S_d),\ \text{ for X=NOR.}
\end{matrix}\right.
\end{equation*}

In order to prove Theorem 1.1, we need the following lemmas.

\begin{lem}\label{lem2.1} (See \cite[Theorem 6]{LPW}.)
Let $S=\{S_d\}$ be a non-homogeneous tensor product problem. Then
for NOR, S is PT if and only if there exists $\tau \in (0,1)$ such
that
\begin{equation*}
Q_\tau:=\sup_{d\in \Bbb N}\frac{1}{\ln ^+d}\sum
_{k=1}^d\ln\Big(1+\sum
_{j=2}^\infty\Big(\frac{\lz(k,j)}{\lz(k,1)}\Big)^\tau\Big)<\infty.
\end{equation*}
\end{lem}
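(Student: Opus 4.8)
The plan is to prove Lemma~\ref{lem2.1} by reducing the polynomial tractability of the tensor product problem for NOR to a tractable condition on the univariate eigenvalue sequences, following the general machinery for non-homogeneous tensor product problems. Since we work under the normalized error criterion, the relevant quantity is the normalized tail $\sum_{j=n+1}^\infty \lz_{d,j}\big/\sum_{j=1}^\infty \lz_{d,j}$, and by the product formula $\sum_{j\in\NN}\lz_{d,j}^\tau=\prod_{k=1}^d\sum_{j=1}^\infty\lz(k,j)^\tau$ stated in the excerpt, the whole analysis factors over the coordinates. First I would recall that $n^{\mathrm{NOR}}(\vz,S_d)$ is the least $n$ with $\big(\sum_{j>n}\lz_{d,j}\big)^{1/2}\le \vz\,\big(\sum_{j\ge1}\lz_{d,j}\big)^{1/2}$, and that PT means $n^{\mathrm{NOR}}(\vz,S_d)\le C\,d^{\,q}\,\vz^{-p}$ for some constants.

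The core technical device is a Chebyshev/Markov-type estimate on the normalized eigenvalue tail. For any $\tau\in(0,1)$ one has the standard bound
\begin{equation*}
\frac{\sum_{j>n}\lz_{d,j}}{\sum_{j\ge1}\lz_{d,j}}
\le \frac{\lz_{d,n+1}^{\,1-\tau}\sum_{j>n}\lz_{d,j}^{\tau}}{\sum_{j\ge1}\lz_{d,j}}
\le (n+1)^{-(1-\tau)/\tau}\,\frac{\big(\sum_{j\ge1}\lz_{d,j}^{\tau}\big)^{1/\tau}}{\sum_{j\ge1}\lz_{d,j}},
\end{equation*}
where the second inequality uses $(n+1)\,\lz_{d,n+1}^{\tau}\le\sum_{j\ge1}\lz_{d,j}^{\tau}$. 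Normalizing all eigenvalues by $\lz_{d,1}=\prod_{k}\lz(k,1)$, the product formula turns the right-hand ratio into
\begin{equation*}
\frac{\big(\sum_{j\ge1}\lz_{d,j}^{\tau}\big)^{1/\tau}}{\sum_{j\ge1}\lz_{d,j}}
=\prod_{k=1}^d\frac{\Big(1+\sum_{j\ge2}\big(\lz(k,j)/\lz(k,1)\big)^{\tau}\Big)^{1/\tau}}{1+\sum_{j\ge2}\lz(k,j)/\lz(k,1)}
\le\prod_{k=1}^d\Big(1+\sum_{j\ge2}\big(\tfrac{\lz(k,j)}{\lz(k,1)}\big)^{\tau}\Big)^{1/\tau},
\end{equation*}
and taking logarithms converts this product into the sum appearing in $Q_\tau$. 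This is what makes the quantity $Q_\tau=\sup_d\frac{1}{\ln^+d}\sum_{k=1}^d\ln\big(1+\sum_{j\ge2}(\lz(k,j)/\lz(k,1))^{\tau}\big)$ the natural governing parameter.

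For the sufficiency direction ($Q_\tau<\infty\Rightarrow$ PT), I would substitute the bound above, so that requiring the normalized tail to be at most $\vz^2$ forces $(n+1)^{(1-\tau)/\tau}\ge \vz^{-2}\exp\!\big(\tfrac1\tau\sum_{k=1}^d\ln(1+\cdots)\big)$. Using $\sum_{k=1}^d\ln(1+\cdots)\le Q_\tau\ln^+d$, the exponential factor is bounded by $d^{\,Q_\tau/\tau}$ (a polynomial in $d$), and solving for $n$ yields $n^{\mathrm{NOR}}(\vz,S_d)\le C\,d^{\,q}\,\vz^{-p}$ with $p=2\tau/(1-\tau)$ and $q=Q_\tau/(1-\tau)$, which is exactly PT. For the necessity direction (PT $\Rightarrow$ $Q_\tau<\infty$ for some $\tau\in(0,1)$), I would argue contrapositively: assuming $Q_\tau=\infty$ for every $\tau\in(0,1)$, I would produce, for each fixed $p,q$, dimensions $d$ along which the eigenvalue distribution is too heavy-tailed for any polynomial bound $C\,d^q\,\vz^{-p}$ to hold, by choosing $\tau$ close to $1$ (so $p=2\tau/(1-\tau)$ exceeds the given exponent) and exploiting that $\frac1{\ln^+d}\sum_{k=1}^d\ln(1+\cdots)\to\infty$ forces the normalized information complexity to grow faster than $d^q\vz^{-p}$.

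The main obstacle I expect is the necessity direction: the sufficiency estimate is a clean one-line Chebyshev argument, but showing that failure of $Q_\tau<\infty$ for all $\tau\in(0,1)$ actually obstructs polynomial tractability requires a sharp lower bound on $n^{\mathrm{NOR}}(\vz,S_d)$ in terms of the univariate quantities. The natural route is to invoke the known characterization of PT for tensor product problems (Theorem~6 of \cite{LPW}), which already packages both directions; since the lemma is quoted directly from that source, the cleanest proof is simply to cite it and verify that our normalization $\lz(k,1)\ge\lz(k,2)\ge\cdots$ matches the hypotheses there, so that the stated form of $Q_\tau$ is exactly the condition from the reference.
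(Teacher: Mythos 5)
The paper gives no proof of this lemma at all: it is imported verbatim from \cite[Theorem 6]{LPW}, so your closing recommendation---cite that theorem after checking that the normalization $\lz(k,1)\ge\lz(k,2)\ge\cdots\ge 0$ matches its hypotheses---is exactly the paper's own treatment. Your Chebyshev-type estimate for the sufficiency half is in fact correct (both displayed inequalities check out, and they do yield PT with exponents $p=2\tau/(1-\tau)$ and $q=Q_\tau/(1-\tau)$), whereas the necessity half is only gestured at and would not stand alone; since that is precisely the part you delegate to the citation, the proposal is consistent with the paper.
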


\begin{lem}\label{lem2.2} (See \cite[Theorem 8]{LPW} or
\cite[Lemma 2.1]{S2}.) Let $S=\{S_d\}$ be a non-homogeneous tensor
product problem. If for $t>0$ there exists a number $\tau
\in(0,1)$ such that
\begin{equation*}
\lim_{d\to\infty}\frac{1}{d^t}\sum_{k=1}^d\sum_{j=2}^\infty\left(\frac{\lz(k,j)}{\lz(k,1)}\right)^\tau=0,
\end{equation*}
then $S$ is $(s,t)$-WT for this $t$ and every $s>0$.
\end{lem}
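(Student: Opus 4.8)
The plan is to bound the information complexity $n(\vz,d)$ from above by a single power sum of the eigenvalues and then feed the hypothesis into that bound. Since $e(n,S_d)^2=\sum_{j>n}\lz_{d,j}$ (the relevant criterion being the normalized one, which in any case coincides with the absolute one in the application where $e(0,d)=1$), it is convenient to pass to the normalized eigenvalues $\wh\lz_{d,j}=\lz_{d,j}\big/\sum_i\lz_{d,i}$, which again factor as $\wh\lz_{d,\j}=\prod_{k=1}^d\wh\lz(k,j_k)$ with $\wh\lz(k,j)=\lz(k,j)\big/\sum_i\lz(k,i)$. In these variables the requirement $e(n,S_d)\le\vz\,e(0,S_d)$ reads $\sum_{j>n}\wh\lz_{d,j}\le\vz^2$, and $\sum_j\wh\lz_{d,j}=1$.

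First I would prove the key estimate. Fix the number $\tau\in(0,1)$ supplied by the hypothesis and set $\wh M_{d,\tau}=\sum_j\wh\lz_{d,j}^\tau=\prod_{k=1}^d\sum_j\wh\lz(k,j)^\tau$. Monotonicity of $\{\wh\lz_{d,j}\}_j$ gives $n\,\wh\lz_{d,n}^\tau\le\sum_{j=1}^n\wh\lz_{d,j}^\tau\le\wh M_{d,\tau}$, hence $\wh\lz_{d,n}\le(\wh M_{d,\tau}/n)^{1/\tau}$; splitting $\wh\lz_{d,j}=\wh\lz_{d,j}^{1-\tau}\wh\lz_{d,j}^\tau$ for $j>n$ then yields
\[
\sum_{j>n}\wh\lz_{d,j}\le\wh\lz_{d,n}^{1-\tau}\sum_{j>n}\wh\lz_{d,j}^\tau\le\wh\lz_{d,n}^{1-\tau}\wh M_{d,\tau}\le\wh M_{d,\tau}^{1/\tau}\,n^{-(1-\tau)/\tau}.
\]
Thus the tail is $\le\vz^2$ as soon as $n\ge\wh M_{d,\tau}^{1/(1-\tau)}\vz^{-2\tau/(1-\tau)}$. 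Since $\wh M_{d,\tau}\ge\sum_j\wh\lz_{d,j}=1$ (because $x^\tau\ge x$ on $[0,1]$) and $\vz<1$, this right-hand side is $\ge1$, so $n(\vz,d)\le 2\,\wh M_{d,\tau}^{1/(1-\tau)}\vz^{-2\tau/(1-\tau)}$ and therefore
\[
\ln n(\vz,d)\le\ln2+\tfrac{1}{1-\tau}\ln\wh M_{d,\tau}+\tfrac{2\tau}{1-\tau}\ln\vz^{-1}.
\]

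Next I would connect $\ln\wh M_{d,\tau}$ to the hypothesis: since $\sum_i\lz(k,i)\ge\lz(k,1)$ we have $\sum_j\wh\lz(k,j)^\tau\le 1+\sum_{j\ge2}(\lz(k,j)/\lz(k,1))^\tau$, so by $\ln(1+x)\le x$,
\[
\ln\wh M_{d,\tau}=\sum_{k=1}^d\ln\sum_j\wh\lz(k,j)^\tau\le\sum_{k=1}^d\sum_{j=2}^\infty\Big(\frac{\lz(k,j)}{\lz(k,1)}\Big)^\tau=:B_d,
\]
which the hypothesis forces to be finite for each $d$ and to satisfy $B_d/d^t\to0$. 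Dividing the logarithmic bound by $\vz^{-s}+d^t$ produces three terms: the constant $\ln2$ over a denominator tending to $\infty$; the term $B_d/(\vz^{-s}+d^t)$, which is $\le B_d/d^t\to0$ when $d\to\infty$ and is $\le B_d\vz^{s}\to0$ when $d$ stays bounded while $\vz^{-1}\to\infty$; and the term $\ln\vz^{-1}/(\vz^{-s}+d^t)$, which tends to $0$ because $\ln x/x^s\to0$ as $x\to\infty$ (using the $d^t$ in the denominator when $\vz^{-1}$ stays bounded). Hence $\ln n(\vz,d)/(\vz^{-s}+d^t)\to0$ as $\vz^{-1}+d\to\infty$, i.e.\ $S$ is $(s,t)$-WT for this $t$ and every $s>0$.

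The heart of the argument is the first estimate: choosing the exponent $\tau$ to be exactly the one furnished by the hypothesis and, above all, passing to normalized eigenvalues so that $\ln\wh M_{d,\tau}$ splits over the coordinates $k$ into a sum dominated termwise by the hypothesis quantity. Once this is in place the remainder is routine bookkeeping in the joint limit $\vz^{-1}+d\to\infty$, the only delicate point being the elementary case split according to whether $\vz^{-1}$ stays bounded.
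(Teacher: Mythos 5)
Your proof is correct, and there is in fact nothing in the paper to compare it against: the authors state this lemma with a citation (\cite[Theorem 8]{LPW}, \cite[Lemma 2.1]{S2}) and never prove it, so your argument fills in that reference rather than paralleling an in-paper proof. The route you take is the standard power-sum machinery for average-case upper bounds, and every step checks out: the factorization of the normalized eigenvalues $\wh\lz(k,j)=\lz(k,j)/\sum_i\lz(k,i)$, the Chebyshev-type tail bound $\sum_{j>n}\wh\lz_{d,j}\le \wh M_{d,\tau}^{1/\tau}n^{-(1-\tau)/\tau}$, the resulting complexity bound $n(\vz,d)\le 2\,\wh M_{d,\tau}^{1/(1-\tau)}\vz^{-2\tau/(1-\tau)}$ (legitimate because $\wh M_{d,\tau}\ge 1$ and $\vz<1$), the estimate $\ln\wh M_{d,\tau}\le B_d$ via $\ln(1+x)\le x$ together with $\sum_i\lz(k,i)\ge\lz(k,1)$, and the case split ($d\to\infty$ versus $d$ bounded with $\vz^{-1}\to\infty$) in the joint limit, where you correctly note that the hypothesis forces $B_d<\infty$ for every $d$ (since $B_d$ is non-decreasing in $d$). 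Your remark on the error criterion is also the right one: this is a NOR statement, which coincides with ABS in the paper's application because $e(0,d)=1$. Finally, it is worth observing that your key estimate is exactly the device the paper itself deploys later, in the sufficiency half of the proof of \thmref{thm1}(iv) (see \eqref{3.28}--\eqref{3.31}); there the exponent must be taken dimension-dependent, $\tau=1-s_d$ with $s_d\to0$, because for $t<1$ the hypothesis of the present lemma with a fixed $\tau$ is strictly stronger than the characterizing condition \eqref{1.8}, whereas under the hypothesis as given a single fixed $\tau$ suffices, exactly as in your write-up.
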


\subsection{Average case approximation problems  with   Gaussian  kernels }

\

Let $C_{\mu_d}$ be the covariance operator of $\mu _d$ with
Gaussian  covariance kernel $K_{d,\gaa}$ given by \eqref{1.0},
where $\mu_d$ is a zero-mean Gaussian measure on $L_{2,d}$. Then
for $f\in L_{2,d}$,
$$C_{\mu_d}(f)(\x)=\int_{\R^d}
K_{d,\gaa}(\x,\y)f(\y)\prod_{k=1}^d\frac{\exp(-y_j^2)}{\sqrt\pi}\,d\y,\
\ \x\in\R^d.$$

First, we consider the case $d=1$.  Let $C_{\mu_1}$ be the
covariance operator of $\mu _1$ with covariance kernel
$K_{1,\ga}$, and let  $\{(\lz_{\ga, j}, \eta
_{\ga,j})\}_{j=1}^\infty$ be the sequence of eigenpairs of the
covariance operator $C_{\mu_1}$, i.e.,
$$C_{\mu_1}\eta_{\gz,j}=\lz_{\ga, j} \eta
_{\ga,j}, \ \ j=1,2,\dots, $$ and  $$\lz_{\gz,1}\ge \lz_{\gz,2}\ge
\dots\ge 0. $$

Specifically we have, see e.g.,
 \cite[Section 4.3.1]{RW} and \cite{FHW2, SW},
\begin{equation}\label{2.3}
\lz_{\ga, j}=(1-\oz_\ga)\oz_\ga^{j-1},\quad \text{with}\quad
\oz_\ga=\frac{2\ga^2}{1+2\ga^2+\sqrt{1+4\ga^2}},
\end{equation}
and
\begin{equation*}
\eta_{\ga,
j}(x)=\sqrt{\frac{(1+4\ga^2)^{1/4}}{2^{j-1}(j-1)!}}\exp\left(-\frac{2\ga^2x^2}{1+\sqrt{1+4\ga^2}}\right)H_{j-1}((1+4\ga^2)^{1/4}x),
\end{equation*}
where $H_{j-1}$ is the standard Hermite polynomial of degree
$j-1$, i.e.,
$$H_{j-1}(x)=(-1)^{j-1}e^{x^2}\frac{d^{j-1}}{dx^{j-1}}e^{-x^2}\quad \text{for all}\quad x\in \R.$$
Clearly, $0<\oz_\ga<1$,  $$ 1-\oz_\ga=\lz_{\ga, 1}>\lz_{\ga,
2}>\dots>0,\  \ {\rm and}\ \ \sum_{j=1}^\infty \lz_{\ga, j}=1.$$
It follows from \eqref{2.3} that $\oz_\ga$ is an increasing
function of $\ga$ and $\oz_\ga$ tends to $0$ iff $\ga$ tends to
$0$. We also have
\begin{equation}\label{2.4}\lim\limits_{\gz\to0} \frac {\oz_\ga}{\gz^2}=1, \ \ {\rm and}\ \ \lim_{\gz\to0} \frac
{\ln \oz_\ga^{-1}}{\ln \gz^{-2}}=1.\end{equation}

 Due to the tensor product structure of  the covariance
operator $C_{\mu_d}$, the eigenvalues and the corresponding
eigenfunctions  of  $C_{\mu _d}$  have the form
\begin{equation}\label{2.5}
\lz_{\gaa,\j}=\prod_{k=1}^{d}\lz_{\ga_k,j_k}=\prod_{k=1}^{d}[(1-\oz_{\ga_k})\oz_{\ga_k}^{j_k-1})]\quad
\text{and}\quad
\eta_{\gaa,\j}=\prod_{k=1}^{d}\eta_{\ga_k,j_k}(x_k),
\end{equation}
for $\j=(j_1,j_2,\dots,j_d)\in \Bbb N^d$. Let  $\{\lz
_{d,j}\}_{j\in \Bbb N}$ be the non-increasing rearrangement of
$\{\lz_{\gaa,\j}\}_{\j\in \Bbb N^d}$. Then we have
\begin{equation}\label{2.6}
\lz _{d,1}=\prod_{k=1}^{d}(1-\oz_{\ga_k}),\end{equation} and
$$e(0,d)=\sum_{k=1}^\infty \lz _{d,k}= \sum_{\j\in \Bbb N^d}
\lz_{\gaa,\j}=\prod_{k=1}^d\sum_{j=1}^\infty \lz_{\ga_k, j}=1,
$$where $e(0,d)$ is the initial error.
This  means that the normalized error criterion and the absolute
error criterion are the same.

 The  $n$th
minimal average  case error is
$$e(n,d)=\Big (\sum _{j=n+1}^{\infty}\lambda _{d,j}  \Big )^{\frac{1}{2}}.$$

The information complexity $n(\vz, d)$ of the approximation
problem App is defined by
$$n(\vz,d)=\inf\ \{n\ |\ e(n,d)\le \vz\}.$$

We emphasize that App is a non-homogeneous tensor product problem
with
\begin{equation}\label{2.7}
\lz (k,j)=(1-\oz_{\ga_k})\oz_{\ga_k}^{j-1}\quad\text{and}\quad
\lz(k,1)=1-\oz_{\ga_k}, \ \ k=1,\dots,d.
\end{equation}

\subsection{Worst case approximation problems with Gaussian kernels}

\

Let $H(K_{d,\gaa})$ be the reproducing kernel Hilbert space with
the kernel $K_{d,\gaa}$ given by \eqref{1.0}. The function space
$H(K_{d,\gaa})$ has been used widely in  numerical computation,
statistical learning, and  engineering (see e.g., \cite{FHW1,
FHW2, RW, FSK}). We consider multivariate approximation problem
$I=\{I_d\}_{d\in \Bbb N}$ which is defined via the embedding
operator
$$ {I}_d: H(K_{d,\gaa})\to L_{2,d}\ \ {\rm with}\ \  {I}_d\,
f=f.$$ We approximate $I_d$ by algorithms that use only finitely
many continuous linear functionals on $H(K_{d,\gaa})$. The worst
case error of approximation by an algorithm $A_{n,d}$ of the form
\eqref{1.2} or \eqref{1.3} is defined as
 $$e^{\rm wor}(A_{n,d})=\sup_{\|f\|_{H(K_{d,\gaa})}\le1}
\|I_d(f)-A_{n,d}(f)\|_{L_{2,d}}.$$ The $n$th minimal worst case
error, for $n\ge 1$, is defined by
$$e^{\rm wor}(n,d)=\inf_{A_{n,d}}e^{\rm wor}(A_{n,d}),$$
where the infimum is taken over all algorithms of the form
\eqref{1.2} or \eqref{1.3} using $n$ information operators $L_1,
L_2, \dots, L_n \in H(K_{d,\gaa})^*$. The error of $A_{0,d}$ is
called the initial error and is given by
$$e^{\rm wor}(0,d )=\sup_{\|f\|_{H(K_{d,\gaa})}\le1}\|I_d f\|_{L_{2,d}}=\|I_d\|.$$

Let $\lz_{d,j},\ {j\in \Bbb N}$ be the eigenvalues of the
covariance operator $C_{\mu_d}$ of the Gaussian measure $\mu_d$
satisfying
$$\lz_{d,1}\ge \lz_{d,2}\ge\dots\ge\lz_{d,k}\ge \dots>0.$$
Then   the $n$th minimal worst case error $e^{\rm wor}(n,d)$  and
the $n$th minimal average  case error $e(n,d)$ are of forms (see
\cite{NW1})
$$e^{\rm wor}(n,d)=\lz_{d,n+1}^{1/2},\ \ {\rm and}\ \  e(n,d)=\Big(\sum_{k=n+1}^\infty
\lz_{d,k}\Big)^{1/2}\ge e^{\rm wor}(n,d).$$

The worst case information complexity  can be studied using either
ABS or NOR. Then we define the worst case information complexity
$n^{\rm wor, X}(\va ,d)$ for $X\in \{{\rm ABS,\, NOR}\}$ as
\begin{equation*}
n^{\rm wor, X}(\va ,d)=\min\{n:\,e^{\rm wor}(n,d)\leq \va CRI_d\},
\end{equation*}where
\begin{equation*}
CRI_d=\left\{\begin{split}
 &\ \ 1, \; \quad\qquad\text{ for X=ABS,} \\
 &e^{\rm wor}(0,d), \text{ for X=NOR}
\end{split}\right. \ \ =\ \ \left\{\begin{split}
 &\ 1, \; \quad\text{ for X=ABS,} \\
 &\lz_{d,1}^{1/2},\ \text{ for X=NOR.}
\end{split}\right.
\end{equation*}

 Obviously, we have \begin{equation}\label{2.8} n^{\rm wor, ABS}(\va ,d)\le
 n(\vz,d).\end{equation}

\section{Proofs of Theorems \ref{thm1} and \ref{thm2}}

First we give two auxiliary lemmas.

\begin{lem}\label{lem3.1}
Let $\gaa=\{\ga_j^2\}_{j\in \Bbb N}$ satisfy \eqref{1.1} and
$r(\gaa)>0$. Then
\begin{equation}\label{3.1}
r(\gaa)=\underset{j\to\infty}{\underline{\lim}}\frac{\ln
\ga_j^{-2}}{\ln j},
\end{equation}
where $r(\ga)$ is given by \eqref{1.4}.
\end{lem}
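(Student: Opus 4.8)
The plan is to work with the second characterization of $r(\gaa)$ recorded in \eqref{1.4}, namely
\[
r(\gaa)=\sup\{\beta\ge 0 : \lim_{j\to\infty}j^\beta\ga_j^2=0\},
\]
and to reduce everything to a single logarithmic identity. Writing $L:=\underset{j\to\infty}{\underline{\lim}}\frac{\ln\ga_j^{-2}}{\ln j}$ for the right-hand side of \eqref{3.1}, I would note that for any $\beta\ge0$ and $j\ge2$,
\[
\ln\bigl(j^\beta\ga_j^2\bigr)=(\ln j)\Bigl(\beta-\tfrac{\ln\ga_j^{-2}}{\ln j}\Bigr),
\]
so the convergence $j^\beta\ga_j^2\to0$ is governed entirely by the sign of the bracket against the factor $\ln j\to\infty$. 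The hypothesis $r(\gaa)>0$ guarantees $\ga_j^2\to0$, hence $\ln\ga_j^{-2}\to+\infty$, so the quotients $\frac{\ln\ga_j^{-2}}{\ln j}$ are eventually positive and $L$ is a well-defined nonnegative (possibly infinite) number. I also record that the set $S:=\{\beta\ge0:\lim_j j^\beta\ga_j^2=0\}$ is downward closed, since $j^{\beta'}\ga_j^2\le j^\beta\ga_j^2$ for $0\le\beta'\le\beta$; thus $r(\gaa)=\sup S$ and it suffices to locate $\sup S$ relative to $L$.

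For the inequality $L\le r(\gaa)$, I would fix any $\beta$ with $0\le\beta<L$. By definition of the limit inferior there is an $\ve>0$ and an index $j_0$ with $\frac{\ln\ga_j^{-2}}{\ln j}\ge\beta+\ve$ for all $j\ge j_0$; hence the bracket above is $\le-\ve$ and $\ln(j^\beta\ga_j^2)\le-\ve\ln j\to-\infty$, so $\beta\in S$. Letting $\beta\uparrow L$ gives $L\le\sup S=r(\gaa)$. For the reverse inequality $r(\gaa)\le L$, I would fix any $\beta>L$. Again by the limit inferior there is $\ve>0$ and a subsequence $j_k\to\infty$ along which $\frac{\ln\ga_{j_k}^{-2}}{\ln j_k}\le\beta-\ve$; along this subsequence the bracket is $\ge\ve$, so $\ln(j_k^\beta\ga_{j_k}^2)\ge\ve\ln j_k\to+\infty$ and $j^\beta\ga_j^2\not\to0$, i.e.\ $\beta\notin S$. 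Since $S$ is downward closed this forces $S\subseteq[0,\beta]$ for every $\beta>L$, whence $r(\gaa)=\sup S\le L$. Combining the two inequalities yields $r(\gaa)=L$, which is \eqref{3.1}. The case $L=+\infty$ (super-polynomial decay of $\ga_j^2$) is covered automatically, since then the first step applies to every $\beta\ge0$ and forces $r(\gaa)=+\infty$ as well.

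The computation is elementary once the logarithmic reformulation is in place; the only point requiring care---and the natural place for a slip---is the bookkeeping with the limit inferior, namely that $\beta<L$ should yield an \emph{eventual} lower bound on the quotients (controlling the whole tail), whereas $\beta>L$ yields only a \emph{subsequential} upper bound (enough to destroy convergence). Matching these two one-sided statements cleanly, together with the downward-closedness of $S$, is exactly what delivers the sharp equality rather than merely a pair of one-sided estimates.
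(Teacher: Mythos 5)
Your proof is correct, but it takes a genuinely different route from the paper's. You work entirely with the second characterization in \eqref{1.4}, $r(\gaa)=\sup\{\beta\ge 0:\lim_j j^\beta\ga_j^2=0\}$, and reduce both inequalities to the pointwise identity $\ln(j^\beta\ga_j^2)=(\ln j)\bigl(\beta-\frac{\ln\ga_j^{-2}}{\ln j}\bigr)$, with the standard liminf bookkeeping (eventual bounds below $L$, subsequential bounds above $L$). The paper instead argues from the first (summability) characterization: for the inequality $r(\gaa)\le\liminf_j\frac{\ln\ga_j^{-2}}{\ln j}$ it uses the monotonicity assumption \eqref{1.1} crucially, via $j\ga_j^{2/\dz}\le\sum_{k=1}^j\ga_k^{2/\dz}\le M_\dz$, and for the reverse inequality it compares $\ga_j^{2/\dz}$ with $j^{-(\az-\vz)/(\az-2\vz)}$ and invokes convergence of the corresponding $p$-series. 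The trade-off is worth noting: your argument is shorter, avoids all series manipulation, and never touches \eqref{1.1}; but it silently leans on the equality of the two suprema asserted inside \eqref{1.4}, and that equality is precisely where monotonicity is indispensable --- without \eqref{1.1} the summability supremum can strictly exceed the decay supremum (e.g.\ $\ga_j^2=1/j$ along $j=2^k$ and much smaller elsewhere), in which case \eqref{3.1} with the summability definition would fail. Since the paper states \eqref{1.4} as a known identity and the lemma explicitly refers to it, your proof is complete as written; just be aware that the paper's proof is self-contained from the summability definition, whereas yours delegates the monotonicity-dependent content to the quoted identity.
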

\begin{proof}
 Since $r(\gaa)>0$,  there exists a positive $\dz$ such that
\begin{equation}\label{3.2}
M_\dz=\sum_{j=1}^\infty\ga_j^{\frac{2}{\dz}}<\infty.
\end{equation}
It follows that
\begin{equation*}
j\ga_j^{\frac{2}{\dz}}\leq\sum_{k=1}^j\ga_k^{\frac{2}{\dz}}\leq
M_\dz.
\end{equation*}
We have
\begin{equation*}
\ln j-\frac{1}{\dz}\ln\ga_j^{-2}\leq\ln M_\dz,
\end{equation*}
which yields
\begin{equation*}
\frac{\ln\ga_j^{-2}}{\ln j}\geq \dz(\frac{\ln j-\ln M_\dz}{\ln
j}).
\end{equation*}
Letting $j\to \infty$ in the above inequality, we conclude that
\begin{equation*}
\underset{j\to\infty}{\underline{\lim}}\frac{\ln \ga_j^{-2}}{\ln
j}\geq\dz.
\end{equation*}
Taking the supremum over all $\dz$ for which \eqref{3.2} holds, we
get
\begin{equation}\label{3.3}
r(\gaa)\leq  \underset{j\to\infty}{\underline{\lim}}\frac{\ln
\ga_j^{-2}}{\ln j}.
\end{equation}

On the other hand, by \eqref{3.3} we know that
$$\az:=\underset{j\to\infty}{\underline{\lim}}\frac{\ln
\ga_j^{-2}}{\ln j}>0.$$ Then for an arbitrary $\vz\in(0,\az/2)$,
there exists an integer $N>0$ such that for all $j\ge N$ we have
$$\frac{\ln \ga_j^{-2}}{\ln j}\geq \az-\vz.$$
This implies that
$$ \ga_j^{2}\leq j^{-(\az-\vz)}.$$
Choosing $\dz=\az-2\vz>0$ and noting that
$\frac{\az-\vz}{\az-2\vz}>1$, we obtain that
\begin{equation*}
\sum_{j=N}^\infty\ga_j^{\frac{2}{\dz}}\leq\sum_{j=N}^\infty
j^{-\frac{\az(\ga)-\vz}{\az(\ga)-2\vz}}<\infty,
\end{equation*}
and so $\sum\limits_{j=1}^\infty\ga_j^{\frac{2}{\dz}}<\infty.$ It
follows from the definition of $r(\gaa)$ that
\begin{equation*}\dz=\az-2\vz\leq r(\gaa).\end{equation*}

Letting $\vz\to 0$ in the above inequality, we conclude that
\begin{equation*}
r(\gaa)\geq \az=\underset{j\to\infty}{\underline{\lim}}\frac{\ln
\ga_j^{-2}}{\ln j},
\end{equation*}which combining with \eqref{3.3}, gives
\eqref{3.1}.
 Lemma \ref{lem3.1} is proved.
\end{proof}

\begin{lem}\label{lem3.2}
Let $\gaa=\{\ga_j^2\}_{j\in \Bbb N}$ satisfy  \eqref{1.1}. Then
\begin{equation}\label{3.4}
\underset{j\to\infty}{\underline{\lim}}\frac{\ln \ga_j^{-2}}{\ln
j}\ge 1,
\end{equation} iff for any $t\in (0,1)$,  \begin{equation}\label{3.5}\lim_{j\to
\infty}j^{1-t}\ga_j^2\,\ln^+ \ga_j^{-2}=0,\end{equation}iff for
any $t\in (0,1)$,
\begin{equation}\label{3.6} \lim_{j\to
\infty}j^{1-t}\ga_j^2=0.\end{equation}
\end{lem}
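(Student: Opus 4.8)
The plan is to prove the chain of equivalences $\eqref{3.4} \Leftrightarrow \eqref{3.5} \Leftrightarrow \eqref{3.6}$ by establishing $\eqref{3.4} \Rightarrow \eqref{3.5} \Rightarrow \eqref{3.6} \Rightarrow \eqref{3.4}$. The implication $\eqref{3.5} \Rightarrow \eqref{3.6}$ is immediate, since $\ln^+ \ga_j^{-2} = \max(1, \ln \ga_j^{-2}) \ge 1$, so $j^{1-t}\ga_j^2 \le j^{1-t}\ga_j^2 \ln^+ \ga_j^{-2} \to 0$. The heart of the matter is therefore the two implications $\eqref{3.4} \Rightarrow \eqref{3.5}$ and $\eqref{3.6} \Rightarrow \eqref{3.4}$.

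For $\eqref{3.4} \Rightarrow \eqref{3.5}$, I would argue as follows. Fix $t \in (0,1)$ and set $\az = \underline{\lim}_{j\to\infty} \tfrac{\ln \ga_j^{-2}}{\ln j} \ge 1$. Choose a parameter $\be$ with $1 - t < \be < 1 \le \az$ (this is possible precisely because $t > 0$ and $\az \ge 1$). Since $\az \ge 1 > \be$, for all large $j$ we have $\tfrac{\ln \ga_j^{-2}}{\ln j} \ge \be$, hence $\ga_j^{-2} \ge j^{\be}$, i.e. $\ga_j^2 \le j^{-\be}$. Then $\ln^+ \ga_j^{-2} \le \max(1, \, \be \ln j)$, which for large $j$ is bounded by a constant times $\ln j$. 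Therefore
\begin{equation*}
j^{1-t}\ga_j^2 \ln^+ \ga_j^{-2} \le C\, j^{1-t} j^{-\be} \ln j = C\, j^{-(\be - 1 + t)} \ln j,
\end{equation*}
and since $\be - 1 + t > 0$ by our choice of $\be$, the right side tends to $0$ as $j \to \infty$. This gives $\eqref{3.5}$.

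For the remaining implication $\eqref{3.6} \Rightarrow \eqref{3.4}$, I would proceed by contraposition, assuming $\eqref{3.4}$ fails, i.e. $\az := \underline{\lim}_{j\to\infty} \tfrac{\ln \ga_j^{-2}}{\ln j} < 1$. Then I can pick $\be$ with $\az < \be < 1$. By the definition of the limit inferior, there is a subsequence $j_k \to \infty$ along which $\tfrac{\ln \ga_{j_k}^{-2}}{\ln j_k} \le \be$, so $\ga_{j_k}^2 \ge j_k^{-\be}$. Now I need to contradict $\eqref{3.6}$ for some $t \in (0,1)$: I choose $t$ with $0 < t < 1 - \be$ (possible since $\be < 1$), so that $1 - t > \be$. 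Along the subsequence,
\begin{equation*}
j_k^{1-t}\ga_{j_k}^2 \ge j_k^{1-t} j_k^{-\be} = j_k^{(1-t) - \be} \to \infty,
\end{equation*}
because $(1-t) - \be > 0$. Hence $\eqref{3.6}$ fails for this $t$, completing the contrapositive. The main subtlety throughout is keeping the three parameters $\az$, $\be$, $t$ in the correct strict order at each step; the delicate point is that $\eqref{3.5}$ and $\eqref{3.6}$ are quantified over \emph{all} $t \in (0,1)$, so for the forward direction I must verify $\eqref{3.5}$ for an arbitrary fixed $t$ (choosing $\be$ depending on that $t$), whereas for the contrapositive it suffices to exhibit a \emph{single} offending $t$, which is why the negations of $\eqref{3.5}$ and $\eqref{3.6}$ are what drive the argument. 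I expect the bookkeeping with $\ln^+$ in the forward direction to be the one genuinely fiddly spot, since one must confirm the $\max(1, \cdot)$ does not spoil the polynomial gain; the estimate $\ln^+ \ga_j^{-2} = O(\ln j)$ under $\eqref{3.4}$ resolves this cleanly.
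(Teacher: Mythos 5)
Your cycle structure and two of the three implications are fine: \eqref{3.5}$\Rightarrow$\eqref{3.6} is immediate as you say, and your contrapositive proof of \eqref{3.6}$\Rightarrow$\eqref{3.4} is correct (the paper argues the same content directly: $j^{1-t}\ga_j^2\le 1$ for all large $j$ gives $\liminf_{j\to\infty}\frac{\ln\ga_j^{-2}}{\ln j}\ge 1-t$, then let $t\to 0^+$; no subsequences needed, but your version is equally valid). The genuine error is in \eqref{3.4}$\Rightarrow$\eqref{3.5}: from $\ga_j^2\le j^{-\be}$ you infer $\ln^+\ga_j^{-2}\le\max(1,\be\ln j)$, but this inequality points the wrong way. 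The bound $\ga_j^2\le j^{-\be}$ means $\ga_j^{-2}\ge j^{\be}$, hence $\ln^+\ga_j^{-2}\ge\be\ln j$ for large $j$; it gives no upper bound at all. Hypothesis \eqref{3.4} only caps $\ga_j^2$ from above and is perfectly compatible with $\ln^+\ga_j^{-2}$ growing arbitrarily fast: take $\ga_j^2=e^{-e^j}$, which satisfies \eqref{1.1} and \eqref{3.4} (the limit inferior is $+\infty$), yet $\ln^+\ga_j^{-2}=e^j$, which is certainly not $O(\ln j)$. So the estimate you describe as resolving the ``fiddly spot'' is false, and the forward implication as written has a hole.

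The conclusion \eqref{3.5} is nonetheless true for such sequences --- when $\ln^+\ga_j^{-2}$ is huge, $\ga_j^2$ is so small that the product still vanishes --- and the clean way to capture this trade-off is the paper's device: the function $h(x)=x\ln(1/x)$ is increasing on $(0,1/e)$. For $j$ so large that $j^{-\be}<1/e$ (and noting that $\ga_j^2<1/e$ forces $\ln^+\ga_j^{-2}=\ln\ga_j^{-2}$), monotonicity converts the hypothesis $\ga_j^2\le j^{-\be}$ into $\ga_j^2\,\ln^+\ga_j^{-2}=h(\ga_j^2)\le h(j^{-\be})=\be\,j^{-\be}\ln j$, whence $j^{1-t}\ga_j^2\,\ln^+\ga_j^{-2}\le\be\,j^{1-t-\be}\ln j\to 0$ because $\be>1-t$. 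With this single substitution --- which is exactly how the paper proves the forward direction, with the particular choice $\be=1-t/2$ --- the rest of your argument goes through unchanged.
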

\begin{proof} Suppose that \eqref{3.4} holds. Then for any $t\in
(0,1)$, we have for sufficiently large $j$,
$$\frac{\ln
\ga_j^{-2}}{\ln j}>1-t/2,$$ which yields that
$$\ga_j^2<j^{t/2-1}\ \ \ {\rm and}\ \ \ \ \ga_j^2\,\ln^+
\ga_j^{-2}< (1-t/2)j^{t/2-1}\ln j,$$ where in the last inequality,
we used the monotonicity of the function $h(x)=x\ln 1/x,\
x\in(0,1/e)$. Then \eqref{3.6} and \eqref{3.5} follow from the
above inequalities immediately.

On the other hand, we suppose that for any $t\in (0,1)$,
\eqref{3.5} or \eqref{3.6}  holds. Noting that we can deduce
\eqref{3.6} from \eqref{3.5}. So \eqref{3.6} holds. For any $t\in
(0,1)$, we have for sufficiently large $j$, $$j^{1-t}\ga_j^2\le
1,$$which implies that $$\frac{\ln \ga_j^{-2}}{\ln j}\ge 1-t.$$
Letting $j\to \infty$ and then $t\to 0+$, we get \eqref{3.4}.
Lemma 3.2 is proved.
\end{proof}

\noindent{\it \textbf{Proof of Theorem \ref{thm1}}.}

(i) It was proved in \cite{FHW2} that  SPT holds for App iff
$r(\gaa)>1$. Clearly, if SPT holds, then PT holds. So in order to
prove (i), by Lemma 3.1 it suffices to show \eqref{1.5} whenever
PT holds.

Assume that PT holds. According to Lemma 2.1 and \eqref{2.7},
there exists a $\tau \in (0,1)$ such that
\begin{equation}\label{3.7} Q_\tau:=\sup_{d\in \Bbb N}\frac{1}{\ln ^+d}\sum
_{k=1}^d\ln\Big(1+\sum
_{j=2}^\infty\oz_{\gz_k}^{(j-1)\tau}\Big)=\sup_{d\in \Bbb
N}\frac{1}{\ln ^+d}\sum
_{k=1}^d\ln\Big(\frac1{1-\oz_{\ga_k}^\tau}\Big)<\infty.
\end{equation}
 Noting that  the function $\varphi(x)=\ln(1-x)+x$ is decreasing in $(0,1)$ due to the fact that $\varphi'(x)=\frac {-x}{1-x}<0$ and $\varphi(0)=0$, we get
 $$\ln \frac1{1-x}>x.$$  This implies that
\begin{equation}\label{3.8}
\ln\left(\frac{1}{1-\oz_{\ga_k}^\tau}\right)>\oz_{\ga_k}^\tau,\quad
\tau \in (0,1).
\end{equation}
It follows from \eqref{3.7} and  \eqref{3.8} that
 \begin{align} \label{3.9}d\oz_{\ga_d}^\tau\le \sum
 _{k=1}^d \oz_{\ga_k}^\tau\le \sum
 _{k=1}^d\ln\Big(\frac{1}{1-\oz_{\ga_k}^\tau}\Big)\le Q_\tau \ln
 ^+d.
  \end{align}
By \eqref{3.9} we obtain further
\begin{equation*}
\frac{\ln\oz_{\ga_d}^{-1}}{\ln d}\geq \frac{\ln d-\ln (\ln^+
d)-\ln Q_\tau}{\tau \ln d}.
\end{equation*}
Letting $d\to \infty$, we get
\begin{equation}\label{3.10}
\underset{d\to\infty}{\underline{\lim}}\frac{\ln
\oz_{\ga_d}^{-1}}{\ln d}\geq \frac{1}{\tau}>1.
\end{equation}
By \eqref{3.9} we have $ \lim\limits_{d\to \infty}\oz_{\ga_d}=0. $
It follows from \eqref{2.4} and \eqref{3.10} that
\begin{equation*}
\underset{d\to\infty}{\underline{\lim}}\frac{\ln \ga_d^{-2}}{\ln
d}=\underset{d\to\infty}{\underline{\lim}}\frac{\ln
\oz_{\ga_d}^{-1}}{\ln d}>1,
\end{equation*}
which completes the proof of (i).

\vskip 2mm

(ii) Let $t>1$ and $s>0$. By \eqref{2.7} and the Stolz theorem we
have for any $\tau\in(0,1)$, \begin{align}\label{3.11}0\le
\lim\limits_{d\to
\infty}d^{-t}\sum_{k=1}^d\sum_{j=2}^\infty\Big(\frac{\lz(k,j)}{\lz(k,1)}\Big)^\tau
&=\lim_{d\to
\infty}\frac{\sum_{k=1}^d\frac{\oz_{\ga_k}^{\tau}}{1-\oz_{\ga_k}^{\tau}}}{d^t}
 =\lim_{d\to
\infty}\frac{\frac{\oz_{\ga_d}^{\tau}}{1-\oz_{\ga_d}^{\tau}}}{d^{t-1}}\\&
\notag \le \lim_{d\to
\infty}d^{1-t}{\frac{\oz_{\ga_1}^{\tau}}{1-\oz_{\ga_1}^{\tau}}}=0,
\end{align}where in the last inequality we used the monotonicity of the function $h(x)=\frac x{1-x},\ x\in(0,1)$.   By Lemma 2.2, we get that $(s,t)$-WT holds for
 $t>1$ and $s>0$. (ii) is proved.

\vskip 2mm

(iii) Let $t=1$ and $s>0$. If  $\lim\limits_{j\to
\infty}\ga_j^2=0$, then  by \eqref{2.4} we have for any $\tau\in
(0,1)$,
$$\lim_{d\to
\infty}{\frac{\oz_{\ga_d}^{\tau}}{1-\oz_{\ga_d}^{\tau}}}=0. $$
Similar to \eqref{3.11} , we get $$\lim\limits_{d\to
\infty}d^{-1}\sum_{k=1}^d\sum_{j=2}^\infty\Big(\frac{\lz(k,j)}{\lz(k,1)}\Big)^\tau
=\lim_{d\to
\infty}\frac{\sum_{k=1}^d\frac{\oz_{\ga_k}^{\tau}}{1-\oz_{\ga_k}^{\tau}}}{d}
 =\lim_{d\to
\infty}{\frac{\oz_{\ga_d}^{\tau}}{1-\oz_{\ga_d}^{\tau}}}=0.$$ By
Lemma 2.2, we know that $(s,1)$-WT holds for
   $s>0$.

   On the other hand, we suppose that $(s,1)$-WT holds for some
   $s>0$. We want to show that $\lim\limits_{j\to \infty}\ga_j^2=0$.
 It follows from the
definition of $n(\va, d)$ that
$$1-\sum_{k=1}^{n(\va, d)}\lz_{d,k}=\sum_{k=n(\va, d)+1}^\infty\lz_{d,k}\le\va^2.$$
We have
\begin{equation}\label{3.12-0}
1-\va^2\le\sum_{k=1}^{n(\va, d)}\lz_{d,k}\le n(\va, d)\lz_{d,1}.
\end{equation}
This implies that
\begin{equation}\label{3.12}
\begin{split}
\ln n(\va, d)&\ge\ln(1-\va^2)+\ln \lz_{d,1}^{-1}\ge\ln(1-\va^2)+\sum_{k=1}^d\ln\big(\frac{1}{1-\oz_k}\big)\\
&\ge\ln(1-\va^2)+d\ln\big(\frac{1}{1-\oz_d}\big)\ge\ln(1-\va^2)+d\oz_d,
\end{split}
\end{equation}
where in the last step, we used the inequality
$\ln\big(\frac{1}{1-x}\big)\ge x$ for $x\in [0, 1)$. Since
$(s,1)$-WT holds for some $s>0$, by \eqref{3.12} we get
$$0=\lim_{d\to\infty}\frac{\ln(n(\frac{1}{2}, d))}{\big(\frac{1}{2}\big)^s+d}\ge \lim\limits_{d\to\infty}\frac{\ln\frac{3}{4}+d\oz_d}{d}=\lim_{d\to\infty}\oz_d\ge 0,$$
which implies $\lim\limits_{j\to \infty}\ga_j^2=0$. This completes
the proof of (iii).

\vskip 2mm

(iv) Suppose that $(s, t)$-WT holds for $s>0$ and $t\in (0, 1)$.
We want to show that \eqref{1.8} holds. First we show
$\lim\limits_{d\to \infty}d^{1-t}\oz_d=0$. Since $(s,t)$-WT holds
for $s>0$ and $t\in(0,1)$, by \eqref{3.12} we get
$$0=\lim_{d\to\infty}\frac{\ln(n(\frac{1}{2}, d))}{\big(\frac{1}{2}\big)^s+d^t}\ge\lim_{d\to\infty}\frac{\ln\frac{3}{4}+d\oz_d}{d^t}=\lim_{d\to\infty}d^{1-t}\oz_d\ge 0.$$
Hence \begin{equation}\label{3.21} \lim_{d\to
\infty}d^{1-t}\oz_d=0.
\end{equation}

Next we show \eqref{1.8} holds. We set
\begin{equation}\label{3.22} u_k:=\max(\oz_k, \frac{1}{2k}),
\qquad \text{and}\qquad
s_k:=\frac{1}{2}\big(\ln^+\frac{1}{u_k}\big)^{-1}, \qquad k\in
\Bbb N.
\end{equation}
 By \eqref{3.12-0} we have
\begin{equation*}
1-\va^2\le\sum_{k=1}^{n(\va,
d)}\lz_{d,k}\le\bigg(\sum_{k=1}^{n(\va, d)}
\lz_{d,k}^{1+s_d}\bigg)^{\frac{1}{1+s_d}}n(\va,
d)^{\frac{s_d}{1+s_d}}\le\Big(\sum_{k=1}^{\infty}\lz_{d,k}^{1+s_d}\Big)^{\frac{1}{1+s_d}}n(\va,
d)^{\frac{s_d}{1+s_d}}.
\end{equation*}
It follows that
$$n(\va, d)\ge(1-\va^2)^{\frac{1+s_d}{s_d}}\bigg(\sum_{k=1}^{\infty}\lz_{d,k}^{1+s_d}\bigg)^{\frac{-1}{s_d}}=(1-\va ^2)^{\frac{1+s_d}{s_d}}\prod_{k=1}^{d}\frac{(1-\oz_k^{1+s_d})^{\frac{1}{s_d}}}{(1-\oz_k)^{1+\frac{1}{s_d}}}.$$
We note that the function
$f(x)=\ln\big(\frac{1-x^{1+s_d}}{(1-x)^{1+s_d}}\big)$  is
monotonically increasing in $x\in (0, 1)$ due to the fact that
$f'(x)=\frac{(1+s_d)(1-x^{s_d})}{(1-x^{1+s_d})(1-x)}>0$ for
$x\in(0, 1)$. We have
\begin{equation}\label{3.23}
\begin{split}
\ln \big(n(\frac{1}{2}, d)\big)&\ge\frac{1+s_d}{s_d}\ln\frac{3}{4}+\frac{1}{s_d}\sum_{k=1}^d\ln \Big(\frac{1-\oz_k^{1+s_d}}{(1-\oz_k)^{1+s_d}}\Big)\\
&\ge\frac{1}{s_d}\ln\frac{3}{4}+\frac{d}{s_d}\ln \Big(\frac{1-\oz_d^{1+s_d}}{(1-\oz_d)^{1+s_d}}\Big)\\
&\ge\frac{1}{s_d}\ln\frac{3}{4}+\frac{d}{s_d}\ln \Big(1+\frac{\oz_d-\oz_d^{1+s_d}}{1-\oz_d}\Big)\\
&\ge\frac{1}{s_d}\ln\frac{3}{4}+
\frac{d\big(\oz_d-\oz_d^{1+s_d}\big)}{s_d\big(1-\oz_d\big)}\ln2,
\end{split}
\end{equation}where in the last inequality we used the inequality
$\ln(1+x)\ge x\ln 2,\ x\in[0,1]$. By \eqref{3.22} we get
\begin{equation}\label{3.24}
\frac{1}{s_d}=2\ln^+\big(\frac{1}{u_d}\big)\le 2\ln^+(2d)
\end{equation}
and
$$\lim_{d\to\infty}\frac{1}{d^ts_d}=\lim_{d\to\infty}\frac{2\ln^+(2d)}{d^t}=0.$$
Since $(s,t)$-WT holds for $s>0$ and $t\in (0, 1)$, we conclude by
\eqref{3.23} and \eqref{3.21} that
\begin{equation*}
\begin{split}
0=\lim_{d\to\infty}\frac{\ln \big(n(\frac{1}{2}, d)\big)}{d^t}&\ge\lim_{d\to\infty}\frac{\ln\frac{3}{4}}{d^ts_d}+\lim_{d\to\infty}\frac{d\big(\oz _d-\oz_d^{1+s_d}\big)}{d^ts_d\big(1-\oz_d\big)}\\
&\ge\lim_{d\to\infty}\frac{d^{1-t}}{s_d}\big(\oz _d-\oz_d^{1+s_d}\big)\ge0,
\end{split}
\end{equation*}
which yields that
\begin{equation}\label{3.25}
\lim_{d\to\infty}\frac{d^{1-t}}{s_d}\big(\oz _d-\oz_d^{1+s_d}\big)=0.
\end{equation}
Applying the mean value theorem to the function $g(x)=a^{1+x}(a\in
(0, 1))$, we get for some $\tz\in(0, 1)$, $$a-a^{1+x}=xa^{1+\tz
x}\ln \frac{1}{a}\le xa\ln\frac{1}{a}.$$ It follows  that
$$0\le\lim_{d\to\infty}\frac{d^{1-t}\big(\frac{1}{2d}-\big(\frac{1}{2d}\big)^{1+s_d}\big)}{s_d}\le
 \lim_{d\to\infty}d^{1-t}\big(\frac{1}{2d}\big)\ln(2d)=\lim_{d\to\infty}\frac{1}{2}d^{-t}\ln(2d)=0,$$
which gives
\begin{equation}\label{3.26}
\lim_{d\to\infty}\frac{d^{1-t}\big(\frac{1}{2d}-\big(\frac{1}{2d}\big)^{1+s_d}\big)}{s_d}=0.
\end{equation}
We remark that the function $u(x)=x-x^{1+s_d}$ is monotonically
increasing in $[0,\big(\frac{1}{1+s_d}\big)^{s_d}]\supset
(0,\frac{1}{e})$ and $\lim\limits_{d\to \infty}u_d=0$. By
\eqref{3.25} and \eqref{3.26}, we have
\begin{equation} \label{3.27}
\lim_{d\to\infty}\frac{d^{1-t}\big(u_d-u_d^{1+s_d}\big)}{s_d}=0.
\end{equation}
Using the mean value theorem, we conclude for some $\tz\in(0, 1)$ that,
$$u_d-u_d^{1+s_d}=u_du_d^{\tz s_d}s_d\big(\ln\big(\frac{1}{u_d}\big)\big)
\ge u_ds_d\big(\ln\big(\frac{1}{u_d}\big)\big)u_d^{s_d}=
u_ds_d\big(\ln\big(\frac{1}{u_d}\big)\big)e^{{\frac{-\ln\frac{1}{u_d}}{2\ln^+\frac{1}{u_d}}}}.$$
It follows from \eqref{3.27} that
\begin{equation*}
\begin{split}
0=\lim_{d\to\infty}\frac{d^{1-t}\big(u_d-u_d^{1+s_d}\big)}{s_d}&\ge\lim_{d\to\infty}d^{1-t}u_d\big(\ln\big(\frac{1}{u_d}\big)\big)
\lim _{d\to\infty}e^{{\frac{-\ln\frac{1}{u_d}}{2\ln^+\frac{1}{u_d}}}}\\
&=e^{-1/2}\lim_{d\to\infty}d^{1-t}u_d\big(\ln\big(\frac{1}{u_d}\big)\big)\ge
0,
\end{split}
\end{equation*}
which implies that
$$\lim_{d\to\infty}d^{1-t}u_d\big(\ln\big(\frac{1}{u_d}\big)\big)=0.$$
By the monotonically of the function $h(x)=x\ln \frac{1}{x}$ in $x\in(0, \frac{1}{e})$, we get
$$0\le \lim_{d\to \infty}d^{1-t}\oz_d\,\ln^+ \big(\frac{1}{\oz_d}\big)\le \lim_{d\to \infty}d^{1-t}u_d\big(\ln \big(\frac{1}{u_d}\big)\big)=0,$$
which combining with \eqref{2.4}, gives \eqref{1.8}.

On the other hand, we suppose that \eqref{1.8} holds. We want to
show that $(s, t)$-WT holds. We have for any $k\in \Bbb N$,
$$k\lz_{d, k}^{1-s_d}\le \sum_{j=1}^k \lz_{d,j}^{1-s_d}\le \sum_{j=1}^\infty \lz_{d,j}^{1-s_d},$$
so that \begin{equation}\label{3.28}
\lz_{d, k}\le \frac{\Big(\sum_{j=1}^\infty \lz_{d,j}^{1-s_d}\Big)^{\frac{1}{1-s_d}}}{k^{\frac{1}{1-s_d}}},
\end{equation}
where $s_d$ is given by \eqref{3.22}. Clearly,
\begin{equation}\label{3.29}
\sum_{k=n+1}^\infty\frac{1}{k^{\frac{1}{1-s_d}}}\le \int_n^\infty\frac{1}{x^{\frac{1}{1-s_d}}}dx=\frac{1-s_d}{s_d}n^{\frac{-s_d}{1-s_d}}.
\end{equation}
Combining \eqref{3.28} with \eqref{3.29} we conclude that
\begin{equation}\label{3.30}
\sum_{k=n+1}^\infty\lz_{d, k}\le
\sum_{k=n+1}^\infty\frac{1}{k^{\frac{1}{1-s_d}}}\Big(\sum_{j=1}^\infty
\lz_{d,j}^{1-s_d}\Big)^{\frac{1}{1-s_d}}\le
\Big(\frac{1-s_d}{s_d}\Big)n^{\frac{-s_d}{1-s_d}}\Big(\sum_{k=1}^\infty
\lz_{d,k}^{1-s_d}\Big)^{\frac{1}{1-s_d}}.
\end{equation}
Setting
$$ n=\left \lfloor\va^{\frac{-2(1-s_d)}{s_d}}\big(\frac{1-s_d}{s_d}\big)^{\frac{1-s_d}{s_d}}\Big(\sum_{k=1}^\infty \lz_{d,k}^{1-s_d}\Big)^{\frac{1}{s_d}}\right \rfloor+1$$
in \eqref{3.30}, we have
$$\sum_{k=n+1}^\infty\lz_{d, k}\le\va^2.$$

Therefore from the definition of $n(\va, d)$, and the inequality $\left \lfloor x\right \rfloor+1\le 2x$ for $x>1$, we get
\begin{equation*}
\begin{split}
n(\va, d)&\le\left \lfloor\va^{\frac{-2(1-s_d)}{s_d}}\big(\frac{1-s_d}{s_d}\big)^{\frac{1-s_d}{s_d}}\Big(\sum_{k=1}^\infty \lz_{d,k}^{1-s_d}\Big)^{\frac{1}{s_d}}\right \rfloor+1\\
&\le 2\va^{\frac{-2(1-s_d)}{s_d}}\big(\frac{1-s_d}{s_d}\big)^{\frac{1-s_d}{s_d}}\Big(\sum_{k=1}^\infty \lz_{d,k}^{1-s_d}\Big)^{\frac{1}{s_d}}.
\end{split}
\end{equation*}
It follows from \eqref{3.24} that
\begin{align}
\ln n(\va, d)&\le
\ln2+\frac{2(1-s_d)}{s_d}\ln(\va^{-1})+\frac{1-s_d}{s_d}
\ln\big(\frac{1-s_d}{s_d}\big)+\frac{1}{s_d}\ln
\Big(\sum_{k=1}^\infty \lz_{d,k}^{1-s_d}\Big)\notag \\
& \le \ln2+\frac{2}{s_d}\ln(\va^{-1})+\frac{1}{s_d}
\ln\big(\frac{1}{s_d}\big)+\frac{1}{s_d}\ln \Big(\sum_{k=1}^\infty
\lz_{d,k}^{1-s_d}\Big)\notag \\ &\label{3.31}
\le\ln2+4\ln^+(2d)\ln(\va^{-1})+2\ln^+(2d)\ln\big(2\ln^+(2d)\big)+\frac{1}{s_d}\ln
\Big(\sum_{k=1}^\infty \lz_{d,k}^{1-s_d}\Big)\\
&\le\ln2+\big(2\ln^+(2d)\big)^2+\big(\ln(\va^{-1})\big)^2\notag\\
&+2\ln^+(2d)\ln\big(2\ln^+(2d)\big)+\frac{1}{s_d}\ln
\Big(\sum_{k=1}^\infty \lz_{d,k}^{1-s_d}\Big).\notag
\end{align}
Note that
$$\lim_{\frac{1}{\va}+d\to\infty}\frac{\ln2+\big(2\ln^+(2d)\big)^2+\big(\ln(\va^{-1})\big)^2+2\ln^+(2d)\ln \big(2\ln^+(2d)\big)}{\big(\frac{1}{\va}\big)^s+d^t}=0.$$
In order to show that $(s,t)$-WT holds, it suffices to prove that
$$\lim_{d\to\infty}\frac{\ln \Big(\sum_{k=1}^\infty \lz_{d,k}^{1-s_d}\Big)}{d^ts_d}=0.$$
We recall that
$$\ln \Big(\sum_{k=1}^\infty \lz_{d,k}^{1-s_d}\Big)=\sum_{k=1}^d \ln\Big(\frac{(1-\oz_k)^{1-s_d}}{1-\oz_k^{1-s_d}}\Big).$$
Note that $v(x)=\ln\frac{(1-x)^\az}{1-x^\az} (\az\in(0, 1))$ is
increasing in $(0,1)$ due to the fact that
$v'(x)=\frac{\az(x^{\az-1}-1)}{(1-x^\az)(1-x)}>0$. We get
\begin{equation*}
\begin{split}
\frac{\ln \Big(\sum_{k=1}^\infty \lz_{d,k}^{1-s_d}\Big)}{d^ts_d}&\le \frac{\sum_{k=1}^d \ln\Big(\frac{(1-u_k)^{1-s_d}}{1-u_k^{1-s_d}}\Big)}{d^ts_d}\\
&=\frac{\sum_{k=1}^d\ln\big(\frac{1}{1-u_k}\big)}{d^t}+\frac{\sum_{k=1}^d \ln\Big(1+\frac{u_k^{1-s_d}-u_k}{1-u_k^{1-s_d}}\Big)}{d^ts_d}\\
&\le\frac{\sum_{k=1}^d\ln\big(\frac{1}{1-u_k}\big)}{d^t}+\frac{\sum_{k=1}^d \frac{u_k^{1-s_d}-u_k}{1-u_k^{1-s_d}}}{d^ts_d}\\
&=:I_{1,d}+I_{2,d},
\end{split}
\end{equation*}
where
\begin{equation}\label{3.25-0}I_{1,d}=\frac{\sum_{k=1}^d\ln\big(\frac{1}{1-u_k}\big)}{d^t}\quad
\text{and}\quad I_{2,d}=\frac{\sum_{k=1}^d
\frac{u_k^{1-s_d}-u_k}{1-u_k^{1-s_d}}}{d^ts_d}.\end{equation} By
\eqref{1.8} and \eqref{2.4}, we have
$$\lim_{d\to\infty}d^{1-t}\oz_d\ln^+\big(\frac{1}{\oz_d}\big)=0,$$which
combining the equality $$ \lim_{d\to\infty}d^{1-t}\big(\frac
{1}{2d}\big)\ln(2d)=0,$$ yields
$$\lim_{d\to\infty}d^{1-t}u_d\ln^+\big(\frac{1}{u_d}\big)=0.$$
We have by the Stolz theorem
$$\lim_{d\to\infty}I_{1, d}=\lim_{d\to\infty}\frac{\ln\big(\frac{1}{1-u_d}\big)}{d^t-(d-1)^t}=\lim_{d\to\infty}\frac{u_d}{d^{t-1}t}=0.$$
Applying the mean value theorem, we obtain for some $\tz\in(0, 1)$,
\begin{align}
u_k^{1-s_d}-u_k&=s_d u_k^{1-\tz s_d}\ln \big(\frac{1}{u_k}\big)\le s_d\,u_k\ln \big(\frac{1}{u_k}\big)\,u_k^{-s_d}\notag\\
&\le s_d\,u_k \ln \big(\frac{1}{u_k}\big)\,e^{\frac{\ln
(\frac{1}{u_k})}{2\ln ^+(\frac{1}{u_k})}}\le e^{1/2}\,s_d\,u_k\ln
(\frac{1}{u_k}).\label{3.32}
\end{align}
It follows from \eqref{3.25-0}, \eqref{3.32} and the inequality
$$1-u_k^{1-s_d}\ge 1-u_1^{1-s_d}\ge 1-u_1^{1-s_1}>0$$ that
$$I_{2,d}\le \frac{\sum_{k=1}^de^{1/2}u_k\big(\ln
\big(\frac{1}{u_k}\big)\big)s_d}{(1-u_1^{1-s_1})d^ts_d}
=C\frac{\sum_{k=1}^du_k\ln \big(\frac{1}{u_k}\big)}{d^t},$$ where
$C=\frac{e^{1/2}}{1-u_1^{1-s_1}}$. By the Stolz theorem we get
\begin{equation*}
\begin{split}
0\le\lim_{d\to \infty}I_{2,d}&\le C\lim_{d\to
\infty}\frac{\sum_{k=1}^du_k\ln \big(\frac{1}{u_k}\big)}{d^t}=
C\lim_{d\to \infty}\frac{u_d\ln \big(\frac{1}{u_d}\big)}{d^t-(d-1)^t}\\
&=\frac{C}{t}\lim_{d\to \infty}d^{1-t}u_d\ln
\big(\frac{1}{u_d}\big)=0.
\end{split}
\end{equation*}
We obtain further
\begin{equation}\label{3.25-1}\lim_{d\to\infty}\frac{\ln
\Big(\sum_{k=1}^\infty
\lz_{d,k}^{1-s_d}\Big)}{d^ts_d}=0.\end{equation} We conclude that
if \eqref{1.8} holds, then
$$\lim_{\frac1 \vz+d\to\infty}\frac{\ln(n(\va,
d))}{\big(\frac{1}{\va}\big)^s+d^t}=0,$$ which means that $(s,
t)$-WT holds. This finishes the proof of (iv).

\vskip 2mm

(v) The proof of (v) follows from (iv) and Lemma 3.2 immediately.

\vskip 2mm

(vi) Suppose that $\lim\limits_{j\to\infty}\ga_j^2>0$. Then
$\lim\limits_{j\to\infty}\oz_{\ga_j}=2A>0$. There there exists an
$N\in\Bbb N$ such that $$\ln(3/4)+d\oz_{\gz_d}\ge d\,A$$for any
$d>N$. By \eqref{3.12} we have for $\vz\in(0,1/2)$,
$$\ln n(\vz,d)\ge\ln n(1/2,d)\ge \ln (3/4)+d\,\oz_{\gz_d}\ge d\,A.$$
It follows that
$$n(\vz,d)\ge (e^A)^d,\ \ \vz\in(0,1/2],\ d>N.$$This means that $App$ suffers from the curse of
dimensionality. (vi) is proved. \vskip 2mm

The proof of  Theorem \ref{thm1} is completed. $\hfill\Box$

\

\noindent{\it \textbf{Proof of Theorem \ref{thm2}}.}

(1) We remark that we have the same results about EXP and UEXP  in
the worst and average case settings.  Indeed, using \eqref{2.8}
and the method in the proof of \cite[Theorem
 4.1]{LX}, we obtain that
${\rm App}=\{ {\rm App}_d\}$ is EXP  iff $I=\{I_d\}$ is EXP with
the same exponent. This completes the proof of (i).

\vskip 2mm

(2) Based on the results of \cite{SW} and \eqref{2.8}, we get that
EC-SPT,  EC-PT,  and EC-QPT do not hold for all shape parameters.
(ii) is proved.

\vskip 2mm

(3) According to \cite[Theorems 3.2 and 4.2]{X2} and \cite[Theorem
3.2]{LXD}, we know that we have the same results in the worst and
average case settings  using ABS concerning EC-WT, EC-UWT, and
EC-$(s,t)$-WT for $0<s\le 1$ and $t>0$. This implies that (v),
(vi) and (viii) hold. We always have EC-$(s,t)$-WT for $0<s\le 1$
and $t>1$. This yields EC-$(s,t)$-WT for $s> 1$ and $t>1$. Hence
(iii) holds. This completes the proofs of (iii), (v), (vi), and
(viii).

\vskip 2mm

(4) If EC-$(s,1)$-WT with $s\ge 1$ holds, then $(s,1)$-WT with
$s\ge 1$ holds.  By Theorem 1.1 (iii), we have
$\lim\limits_{j\to\infty}\gz_j^2=0.$

On the other hand, if $\lim\limits_{j\to\infty}\gz_j^2=0$, then
EC-WT holds and hence, EC-$(s,1)$-WT with $s\ge 1$ holds. This
completes the proof of (iv).

\vskip 2mm

(5) If EC-$(s,t)$-WT with $s> 1$ and $t<1$ holds, then $(s,t)$-WT
with $s> 1$ and $t<1$ holds. By Theorem 1.1 (iv), we have
\eqref{1.12}.

On the other hand, if \eqref{1.12} holds, then by \eqref{3.31} we
have
$$\ln n(\vz,d)\le \ln2+4\ln^+(2d)\ln(\va^{-1})+2\ln^+(2d)\ln\big(2\ln^+(2d)\big)+\frac{1}{s_d}\ln
\Big(\sum_{k=1}^\infty \lz_{d,k}^{1-s_d}\Big),$$ where $s_d$ is
given by \eqref{3.22}. By \eqref{3.25-1}, we obtain
\begin{equation}\label{3.35}\lim_{d\to\infty}\frac{\ln2+2\ln^+(2d)\ln\big(2\ln^+(2d)\big)+\frac{1}{s_d}\ln
\Big(\sum_{k=1}^\infty
\lz_{d,k}^{1-s_d}\Big)}{d^t}=0.\end{equation} For $s>1$, by the
Young inequality $ab\le \frac{a^p}p+\frac{b^{p'}}{p'},\ a,b\ge0, \
1/p+1/p'=1$ with $p=\frac{1+s}2,\ p'=\frac{s+1}{s-1}$ we have
$$\lim_{\vz^{-1}+d\to\infty}\frac
{\ln^+(2d)\ln(\va^{-1})}{(1+\ln\vz^{-1})^s+d^t}=
\lim_{\vz^{-1}+d\to\infty}\frac
{\frac{(\ln\va^{-1})^{\frac{s+1}2}}p+ \frac
{(\ln^+(2d))^{p'}}{p'}} {(1+\ln\vz^{-1})^s+d^t}=0,$$ which
combining \eqref{3.31} and \eqref{3.35}, leads to
$$\lim_{\vz^{-1}+d\to\infty}\frac
{\ln n(\vz,d)}{(1+\ln\vz^{-1})^s+d^t}=0. $$This finishes the proof
of (vii).

\vskip 2mm

The proof of  Theorem \ref{thm2} is completed. $\hfill\Box$

\

\section*{Acknowledgments}
 The  authors were Supported by the
National Natural Science Foundation of China (Project no.
11671271) and the Beijing Natural Science Foundation (1172004).

\end{document}